\theoremstyle{definition}
\newtheorem{definition}{Definition}[section]
\newtheorem{remark}[definition]{Remark}
\newtheorem{algorithm}[definition]{Algorithm}
\theoremstyle{plain}
\newtheorem{lemma}[definition]{Lemma}
\newtheorem{proposition}[definition]{Proposition}
\newtheorem{theorem}[definition]{Theorem}
\newtheorem{corollary}[definition]{Corollary}
\newtheorem{conjecture}[definition]{Conjecture}
\newcommand{\TT}{\mathrm{TT}}
\newcommand{\DD}{\mathrm{DD}}
\begin{document}

\title[Dendriform analogues of Lie and Jordan triple systems]
{Dendriform analogues of\\Lie and Jordan triple systems}

\author{Murray R. Bremner}

\address{Department of Mathematics and Statistics, University of Saskatchewan, Canada}

\email{bremner@math.usask.ca}

\author{Sara Madariaga}

\address{Department of Mathematics and Statistics, University of Saskatchewan, Canada}

\email{madariaga@math.usask.ca}


\subjclass[2010]{Primary 
17A40. 
Secondary 
17-04, 
17A30, 
17A32, 
17A50, 
17B01, 
17B60, 
17C05, 
17C50. 
}

\keywords{Dendriform dialgebras, 
pre-Lie algebras, 
pre-Jordan algebras, 
triple systems,
polynomial identities, 
computer algebra, 
representation theory of the symmetric group}

\begin{abstract}
We use computer algebra to determine all the multilinear polynomial identities of degree $\le 7$
satisfied by the trilinear operations $(a \cdot b) \cdot c$ and $a \cdot (b \cdot c)$ in the
free dendriform dialgebra, where $a \cdot b$ is the pre-Lie or the pre-Jordan product.
For the pre-Lie triple products, we obtain one identity in degree 3, and three independent identities
in degree 5, and we show that every identity in degree 7 follows from the identities of lower degree.
For the pre-Jordan triple products, there are no identities in degree 3, five independent identities
in degree 5, and ten independent irreducible identities in degree 7.
Our methods involve linear algebra on large matrices over finite fields,
and the representation theory of the symmetric group.
\end{abstract}

\maketitle

\allowdisplaybreaks


\section{Introduction}

This paper is motivated by the desire to discover natural analogues of Lie and Jordan triple systems in 
the setting of dendriform dialgebras.
Jacobson \cite{Jacobson1949} introduced Lie and Jordan triple systems to study subspaces of associative algebras 
closed under either the triple commutator $[[x,y],z]$ (the Lie triple product) where $[a,b] = ab - ba$,
or the triple anticommutator $( a \circ b ) \circ c$ where $a \circ b = ab + ba$
(the Jordan triple product is now defined to be $a \circ (b \circ c) - b \circ (a \circ c) + (a \circ b) \circ c$).

Loday \cite{Loday1995} introduced associative dialgebras and their Koszul duals, the dendriform dialgebras, 
which are closely related to the Hopf algebra of binary trees;
see Loday and Ronco \cite{LodayRonco1998}, Loday \cite{Loday2001}.
Dendriform dialgebras provide a natural setting for pre-Lie algebras, which have been studied since the early 1960s; 
pre-Jordan algebras were introduced recently by Hou, Ni and Bai \cite{HouNiBai2013}.

Kolesnikov \cite{Kolesnikov2008} discovered a procedure for defining (binary) nonassociative analogues
of associative dialgebras; Pozhidaev \cite{Pozhidaev2010} extended this to the $n$-ary case.
Their work was reformulated by Bremner, Felipe and S\'anchez-Ortega \cite{BFSO2012} in the KP algorithm:
the input is a set of multilinear identities defining a variety of multi-operator algebras;
the output is the corresponding identities for dialgebras.
Gubarev and Kolesnikov \cite{GubarevKolesnikov2013} defined analogues of (binary) nonassociative
algebras in the setting of dendriform dialgebras.

In the present paper we use computer algebra to determine the multilinear polynomial identities of degree $\le 7$
for the trilinear operations $(a \cdot b) \cdot c$ and $a \cdot (b \cdot c)$ in the
free dendriform dialgebra, where $a \cdot b$ is the pre-Lie or pre-Jordan product.
In \S\ref{theoreticalsection} we recall definitions and basic results for dendriform dialgebras
and nonassociative multi-operator algebras.
In \S\ref{dualitysection} we determine the relation between the analogues of 
the Lie and Jordan triple products in associative and dendriform dialgebras.
In~\S\ref{preliesection}, we determine the polynomial identities of degree $\le 7$ relating 
the trilinear operations $(a \cdot b) \cdot c$ and $a \cdot (b \cdot c)$ in the free dendriform algebra, 
where $a \cdot b = a \prec b - b \succ a$ is the pre-Lie product;
we obtain one identity in degree 3, and three independent identities
in degree 5, and we show that every identity in degree 7 follows from the identities of lower degree.
In \S\ref{prejordansection}, we obtain analogous results for the pre-Jordan triple product
$a \cdot b = a \succ b + b \prec a$;
there are no identities in degree 3, five independent identities
in degree 5, and ten independent irreducible identities in degree~7.

Our methods rely on linear algebra for large matrices over finite fields,
and the representation theory of the symmetric group.
Our calculations were performed using the computer algebra system \texttt{Maple}. 
Unless indicated, the base field $\mathbb{F}$ has characteristic 0, so that
any polynomial identity is equivalent to a set of multilinear identities; 
see Zhevlakov et al.~\cite[Ch.~1]{Zhevlakov1982}.
Multilinear identities can be regarded as elements of the direct sum of copies of 
the group algebra $\mathbb{F} S_n$ of the symmetric group,
which is semisimple over a field of characteristic 0 or $p > n$.
We can therefore save computer memory by using modular arithmetic, as long as we use a prime
greater than the degree of the identities.


\section{Free nonassociative structures} \label{theoreticalsection}

We study the $S_n$-module structure of the kernel of a linear map, called the expansion map, 
from the multilinear subspace of degree $n$ in the free algebra with two ternary operations
to the corresponding subspace in the free dendriform dialgebra.
We begin with some general definitions related to free multioperator algebras.

\begin{definition}
Let $\Omega$ be a set of operation symbols of various arities,
let $X$ be a set of variables, 
and let $\mathbb{F}(\Omega,X)$ be the corresponding free algebra over $\mathbb{F}$.
Let $A$ be an algebra in the variety of all $\Omega$-algebras, and let
$f(x_1, \dots, x_n)$ be in the multilinear subspace $\mathbb{F}(\Omega,X)_n$ of degree $n$.
We say that $f$ is a \textbf{polynomial identity} for $A$ if $f(a_1, \dots, a_n) = 0$ for all $a_1, \dots, a_n \in A$, 
and we write $f(x_1, \dots, x_n) \equiv 0$.
The space $\mathbb{F}(\Omega,X)_n$ is a module over the symmetric group $S_n$ acting by permutations of the variables:
$\sigma \cdot x_i = x_{\sigma(i)}$, $1 \le i \le n$.
The subspace of identities satisfied by an algebra $A$ is an $S_n$-submodule.
\end{definition}

\begin{definition}
Given $f, f_1, \dots, f_k \in F(\Omega,X)_n$, 
we say that $f$ is a \textbf{consequence} of $f_1, \dots, f_k$
if $f$ belongs to the $S_n$-module generated by $f_1, \dots, f_k$.
In other words, $f$ is a linear combination of permutations of $f_1, \dots, f_k$.
\end{definition}

\begin{definition}
Let $\TT(X) = \mathbb{F}(\Omega,X)$ be the \textbf{free TT-algebra} with two ternary operations
$\Omega = \{ \omega_1, \omega_2 \}$; we write $[x,y,z]_i = \omega_i(x,y,z)$, $i = 1, 2$.
A basis of $\TT(X)$ consists of the \textbf{TT-monomials} obtained by applying the operations 
$[ \ast, \ast, \ast ]_i$ to $X$.
A \textbf{TT-type} is a placement of operations in a TT-monomial, ignoring the variables.
We write $\TT_n = \mathbb{F}(\Omega,X)_n$ for the multilinear subspace of degree $n$.
\end{definition}

\begin{algorithm} \label{TTtypesalgorithm}
The total order of the TT-types in (odd) degree $n$ is determined by the following recursive algorithm:
  \begin{itemize}
  \item
  Set $\texttt{TT}[ n ] \leftarrow [ \, ]$ (empty list).
  \item
  For $i = 1, 3, \dots, n{-}2$ do for $j = 1, 3, \dots, n{-}i{-}1$ do:
  \item[] \quad
  For $x \in \texttt{TT}[ n{-}i{-}j ]$ do 
  for $y \in \texttt{TT}[ j ]$ do 
  for $z \in \texttt{TT}[ i ]$ do:
  \item[] \quad \quad
  Append $[x,y,z]_1$ and $[x,y,z]_2$ to $\texttt{TT}[n]$. 
  \end{itemize}
The multilinear TT-monomials form a basis of $\TT_n$ and are ordered first by TT-type and then by lex order 
of the permutation of the variables. 
\end{algorithm}

\begin{definition}
From $f \in \TT_n$ we obtain $2(n{+}3)$ \textbf{liftings} of $f$ in $\TT_{n+2}$, 
which generate the $S_{n+2}$-submodule of consequences of $f$ in degree $n{+}2$.
Using $x_{n+1}, x_{n+2}$ we perform $n$ substitutions and 3 multiplications for $i = 1,2$:
  \begin{align*}
  &
  f( \, [ \, x_1, \, x_{n+1}, \, x_{n+2} \, ]_i, \, x_2, \, \dots, \, x_n \, ),
  \; \dots, \;
  f( \, x_1, \, \dots, \, [ \, x_k, \, x_{n+1}, \, x_{n+2} \, ]_i, \, \dots, \, x_n \, ),
  \\
  &
  \qquad \dots, \;
  f( \, x_1, \, \dots, \, x_{n-1}, \, [ \, x_n, \, x_{n+1}, \, x_{n+2} \, ]_i \, ),
  \\
  &
  [ \, f( \, x_1, \, x_2, \, \dots, \, x_n \, ), \, x_{n+1}, \, x_{n+2} \, ]_i,  
  \qquad
  [ \, x_{n+1}, \, f( \, x_1, \, x_2, \, \dots, \, x_n \, ), \, x_{n+2} \, ]_i,  
  \\
  &
  [ \, x_{n+1}, \, x_{n+2}, \, f( \, x_1, \, x_2, \, \dots, \, x_n \, ) \, ]_i.
  \end{align*}
\end{definition}

\begin{definition}
A \textbf{dialgebra} is an algebra with two binary operations $\Omega = \{ \prec, \succ \}$.
\end{definition}

\begin{definition} Leroux \cite{Leroux2003,Leroux2011}.
An \textbf{L-algebra} is an algebra with two binary operations $\Omega = \{ \prec, \succ \}$
satisfying \textbf{inner associativity}:
  \begin{equation}
  \label{L-identity}
  (x \succ y) \prec z \equiv x \succ (y \prec z).
  \end{equation}
\end{definition}

\begin{definition} Loday \cite{Loday1995}.
A \textbf{dendriform dialgebra} is an L-algebra satisfying:
  \begin{align}
  \label{dendriform1}
  (x \prec y) \prec z &\equiv x \prec (y \prec z) + x \prec (y \succ z),
  \\
  \label{dendriform2}
  x \succ (y \succ z) &\equiv (x \succ y) \succ z + (x \prec y) \succ z.
  \end{align}
\end{definition}

\begin{definition} Bokut, Chen, Huang \cite{BCH2013}.
Let $u$ be a monomial in the operations $\prec$, $\succ$ and the variables $X$.
We say that $u$ is a \textbf{normal L-monomial} if:
  \begin{itemize}
  \item
  $u \in X$, or
  \item
  $u = v \succ w$ where $v$ and $w$ are normal L-monomials, or
  \item
  $u = v \prec w$ where $v = v_1 \prec v_2$ and $v_1$, $v_2$, $v$, $w$ are normal L-monomials.
  \end{itemize}
We write $N(X)$ for the set of all normal L-monomials.
\end{definition}

\begin{theorem} \emph{Chen, Wang \cite{ChenWang2010}.} \label{normaltheorem}
The set $S$ is a Gr\"obner-Shirshov basis for the $T$-ideal generated by 
identities \eqref{dendriform1}-\eqref{dendriform2} in the free L-algebra, where
  \begin{align*}
  &
  S = \{ \, f_1(x,y,z), \, f_2(x,y,z), \, f_3(x,y,z,v) \mid x,y,z,v \in N(X) \, \},
  \\
  &
  f_1(x,y,z) = (x \prec y) \prec z - x \prec (y \prec z) - x \prec (y \succ z),
  \\
  &
  f_2(x,y,z) = (x \prec y) \succ z + (x \succ y) \succ z - x \succ (y \succ z),
  \\
  &
  f_3(x,y,z,v) = ((x \succ y) \succ z) \succ v - (x \succ y) \succ (z \succ v) + (x \succ (y \prec z)) \succ v.
  \end{align*}
\end{theorem}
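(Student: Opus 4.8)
The plan is to prove this by Shirshov's algorithm for Gr\"obner--Shirshov bases (GSB), via the Composition--Diamond Lemma for free algebras with two binary operations; this is the method of Bokut, Chen and Huang \cite{BCH2013}, and the result itself is Chen and Wang's \cite{ChenWang2010}.

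First I would fix the ambient object and the order. Let $D(X) = \mathbb{F}\langle X; \prec, \succ \rangle$ be the free algebra with two binary operations and no relations, with its standard basis of bracketed monomials, and encode \eqref{L-identity} as the polynomial $\ell(x,y,z) = (x \succ y) \prec z - x \succ (y \prec z)$. Put a monomial order on the monomials of $D(X)$ --- compare first by degree and then, within a fixed degree, by a recursive path-type comparison of the bracketing trees in which at each node a product outranks a leaf and $\prec$ outranks $\succ$ --- and check that under it the leading monomials are $\overline{\ell} = (x \succ y) \prec z$ and, for the three polynomials in the statement, $\overline{f_1} = (x \prec y) \prec z$, $\overline{f_2} = (x \prec y) \succ z$, $\overline{f_3} = ((x \succ y) \succ z) \succ v$ (the first term of each, with coefficient $1$). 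Note that the monomials in reduced form with respect to $\ell$ alone are exactly the normal L-monomials $N(X)$ --- since $\ell$ has no nontrivial self-compositions this already exhibits the linear basis of the free L-algebra.

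Next comes Shirshov's completion starting from $\{\ell, f_1, f_2\}$. Because $\overline{\ell}, \overline{f_1}, \overline{f_2}$ are trees of depth $\le 2$, there are only finitely many ambiguities --- the intersection compositions, where two leading trees overlap without one containing the other, and the inclusion compositions, where one leading tree is a subtree of another. For each ambiguity $w$ one forms the composition, reduces it modulo the current relations, and records any nonzero irreducible remainder. The completion turns out to produce exactly one new generator, $f_3$ with $\overline{f_3} = ((x \succ y) \succ z) \succ v$, forced by a composition among $\ell, f_1, f_2$ whose remainder does not reduce; one then verifies that with $f_3$ adjoined the procedure halts --- every remaining composition, in particular the ones involving the depth-$3$ tree $\overline{f_3}$ (which has the most overlap positions), reduces to $0$, and no further relation is forced.

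The main obstacle is precisely this last verification: it is a finite but intricate case analysis of tree overlaps, and the bookkeeping is delicate because each reduction interleaves applications of $\ell$ (re-normalizing into $N(X)$) with the dendriform rewritings $f_1, f_2, f_3$, and the reduction order must be chosen so that every diamond closes. Granting it, the Composition--Diamond Lemma gives that $\{\ell, f_1, f_2, f_3\}$ is a GSB in $D(X)$; passing to the quotient by $\langle \ell \rangle$, that is, to the free L-algebra with basis $N(X)$, yields that $S = \{\, f_1(x,y,z),\, f_2(x,y,z),\, f_3(x,y,z,v) \mid x,y,z,v \in N(X) \,\}$ is a GSB for the $T$-ideal generated by \eqref{dendriform1}--\eqref{dendriform2}. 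As an independent check one can instead establish the GSB property by counting: the normal monomials always span the quotient, so it suffices to show that the number of normal monomials of degree $n$ equals the dimension of the degree-$n$ component of the free dendriform dialgebra of Loday and Ronco \cite{LodayRonco1998}; this combinatorial identity makes the normal monomials a basis, which is equivalent to the GSB property.
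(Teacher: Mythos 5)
There is a genuine gap, and it is worth noting first that the paper itself does not prove this statement: Theorem \ref{normaltheorem} is imported verbatim from Chen and Wang \cite{ChenWang2010} (in the framework of the Composition--Diamond Lemma for L-algebras of Bokut, Chen and Huang \cite{BCH2013}), so there is no internal argument to match your proposal against. Judged on its own terms, your proposal is a plan for the standard proof rather than a proof. The decisive content of a Gr\"obner--Shirshov basis theorem is the verification that every composition of the proposed relations reduces to zero, and you explicitly defer exactly this step (``The completion turns out to produce exactly one new generator\dots one then verifies\dots Granting it\dots''). Nothing in the proposal establishes that the completion of $\{\ell, f_1, f_2\}$ yields only $f_3$, nor that all compositions involving $f_3$ are trivial; these finite but intricate reductions are the theorem, and asserting that they ``turn out'' to work is not an argument. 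Two further points are also left unverified: that the recursive tree comparison you describe is a well-founded monomial order compatible with substitution into larger monomials (without this the Composition--Diamond Lemma does not apply), and that the leading terms are as you claim under that order. Moreover, your description of the ambiguities is calibrated to the associative case: for subtrees of a tree monomial there are no genuine ``intersection'' overlaps, while in the L-algebra formulation the relevant compositions are taken relative to normal L-monomials and interleave reductions by $\ell$, which is precisely where $f_3$ arises; this bookkeeping is acknowledged but not done.

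Your proposed ``independent check'' by counting has the same character: it is a correct equivalence (the $S$-irreducible monomials always span the quotient, so equality of their count with the known dimension of the free dendriform dialgebra of Loday and Ronco \cite{LodayRonco1998} would force them to be a basis, which is equivalent to the GSB property), and it is in fact close in spirit to what Chen and Wang actually do via Hilbert series. But you neither enumerate the normal DD-monomials nor compare the resulting generating function with the dendriform one, so this route is also only sketched. To turn the proposal into a proof you would need either the complete composition check or the complete dimension count; as written, both are missing.
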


\begin{remark}
Madariaga \cite{Madariaga2013} has obtained a simpler basis using the Composition-Diamond Lemma
for non-symmetric operads from Dotsenko and Vallette \cite{DotsenkoVallette2013}.
\end{remark}

\begin{definition} Chen, Wang \cite{ChenWang2010}.
A monomial $u$ in the operations $\prec$, $\succ$ and the variables $X$ 
is a \textbf{normal DD-monomial} if:
  \begin{itemize}
	\item $ u \in X $, or
	\item $ u = x \prec v $, where $x \in X$ and $v$ is a normal DD-monomial, or
	\item $ u = x \succ v $, where $x \in X$ and $v$ is a normal DD-monomial, or
	\item $ u = ( x \succ u_1 ) \succ u_2 $, where $x \in X$ and $u_1,u_2$ are normal DD-monomials.
  \end{itemize}
A \textbf{normal DD-type} is the placement of operations in a normal DD-monomial, 
ignoring the variables.
We write $\DD(X)$ for the free dendriform dialgebra and $\DD_n$ for the multilinear subspace of degree $n$.
\end{definition}

\begin{algorithm}
The total order of the normal DD-types in degree $n$ is determined by the following recursive algorithm,
where $\ast$ is a placeholder for a variable:
  \begin{itemize}
  \item
  Set $\texttt{DD}[ n ] \leftarrow [ \, ]$ (empty list).
  \item
  For $w \in \texttt{DD}[ n{-}1 ]$ do:
  \item[] \quad
  Append $\ast \prec w$ and $\ast \succ w$ to $\texttt{DD}[ n ]$.
  \item
  For $i = 1, \dots, k{-}2$ do for $v \in \texttt{DD}[ i ]$ do for $w \in \texttt{DD}[ k{-}1{-}i ]$ do:
  \item[] \quad
  Append $( \ast \succ v ) \succ w$ to $\texttt{DD}[ n ]$.
  \end{itemize}
The multilinear DD-monomials form a basis for $\DD_n$ and are ordered first 
by normal DD-type and then by lex order of the permutation of the variables. 
\end{algorithm}

\begin{remark}
Chen and Wang \cite[Cor.~3.5]{ChenWang2010} proved that the normal DD-monomials form a basis of 
the free dendriform dialgebra.
Bremner and Madariaga~\cite{BremnerMadariaga2013} gave an algorithm to rewrite any dialgebra monomial 
in terms of normal DD-monomials.
\end{remark}

\begin{definition}
Let $\mu_i(x,y,z) \in \DD_3$ ($i = 1,2$) be trilinear operations in the free dendriform dialgebra.
For all (odd) $n$, the \textbf{expansion map} $\mathcal{E}_n \colon \TT_n \to \DD_n$
is defined by replacing each occurrence of $[x,y,z]_i$ by $\mu_i(x,y,z)$, $i = 1,2$.
The \textbf{expansion matrix} $E_n$ represents $\mathcal{E}_n$ using the ordered bases defined above.
The (nonzero) elements of the kernel of $\mathcal{E}_n$ are the polynomial identities of degree $n$ 
satisfied by the operations $\mu_1$, $\mu_2$ in the free dendriform dialgebra.
\end{definition}


\section{Analogues of Lie and Jordan triple products} \label{dualitysection}

In an associative algebra, the Lie and Jordan triple products are defined by
  \begin{align*}
  [a,b,c] &= [[a,b],c] = abc - bac - cab + cba,
  \\
  \{a,b,c\} &= a \circ (b \circ c) - b \circ (a \circ c) + (a \circ b) \circ c = 2( abc + cba ),
  \end{align*}
and these two trilinear operations are related by the equation
  \begin{equation}
  \label{LJTSequation}
  [a,b,c] = \frac12 \big( \{a,b,c\} - \{b,a,c\} \big),
  \end{equation}
which accounts for the classical connection between Lie and Jordan triple systems.

The polynomial identities defining associative and dendriform dialgebras involve only the identity permutation 
of the variables.
However, we will study two trilinear operations which generalize the Lie and Jordan triple products, and hence 
involve permutations.
We consider the set $\Omega = \{ \circ_1, \circ_2 \}$ of two binary operation symbols, and we write $\mathsf{BB}_n$
for the multilinear subspace of degree $n$ in the free algebra $\mathbb{F}(\Omega,X)$ where $X = \{x_1,\dots,x_n\}$.
The $S_3$-module $\mathsf{BB}_3$ has dimension 48 and the following basis: 
  \[
  \big\{ 
  \;
  ( x_{\sigma(1)} \circ_i x_{\sigma(2)} ) \circ_j x_{\sigma(3)}, 
  \;
  x_{\sigma(1)} \circ_i ( x_{\sigma(2)} \circ_j x_{\sigma(3)} )
  \mid
  i, j \in \{1,2\}, 
  \;
  \sigma \in S_3
  \; 
  \big\}.
  \]
The following lemmas can be proved by direct calculation.

\begin{lemma}
The $S_3$-submodule $\mathsf{Dias}_3 \subset \mathsf{BB}_3$, 
spanned by the permutations of the relations defining associative dialgebras, has dimension 30.

The $S_3$-submodule $\mathsf{Dend}_3 \subset \mathsf{BB}_3$, 
spanned by the permutations of the relations defining dendriform dialgebras, has dimension 18.
\end{lemma}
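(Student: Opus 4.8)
The plan is to verify both dimension claims by an explicit computation in the $48$-dimensional space $\mathsf{BB}_3$, which I would organize by fixing the ordered basis displayed above (so that a vector is a list of $48$ coordinates indexed by the pair of placements $(\text{product-shape},\,\text{operation pair }(i,j))$ and the permutation $\sigma\in S_3$). First I would list the defining relations in degree $3$ for each variety. For associative dialgebras these are the five Loday relations
\[
(x\circ_1 y)\circ_1 z\equiv x\circ_1(y\circ_1 z),\quad
(x\circ_1 y)\circ_1 z\equiv x\circ_1(y\circ_2 z),\quad
(x\circ_1 y)\circ_2 z\equiv x\circ_2(y\circ_1 z),
\]
\[
(x\circ_1 y)\circ_2 z\equiv x\circ_2(y\circ_2 z),\quad
(x\circ_2 y)\circ_2 z\equiv x\circ_2(y\circ_2 z),
\]
with $\circ_1={\prec}$, $\circ_2={\succ}$; for dendriform dialgebras they are the three relations \eqref{L-identity}, \eqref{dendriform1}, \eqref{dendriform2}. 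Each relation $r(x,y,z)$ is a multilinear element of the free algebra on the three variables $x,y,z$, hence, after substituting $x\mapsto x_1$, $y\mapsto x_2$, $z\mapsto x_3$, a specific vector in $\mathsf{BB}_3$; the $S_3$-module it generates is spanned by the six vectors obtained by letting $\sigma$ run over $S_3$.

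Next I would assemble, for each variety, the matrix whose rows are all these generator vectors — $30$ rows ($5$ relations $\times\ 6$ permutations) for $\mathsf{Dias}_3$ and $18$ rows ($3\times 6$) for $\mathsf{Dend}_3$ — and compute its rank by row reduction over $\mathbb{F}$ (or over a large prime, exactly as the paper does elsewhere). The claim $\dim\mathsf{Dias}_3=30$ amounts to the assertion that these $30$ vectors are linearly independent, i.e. the matrix has full row rank $30$; the claim $\dim\mathsf{Dend}_3=18$ asserts the analogous full-rank statement for the $18$-row matrix. Since "the following lemmas can be proved by direct calculation" is exactly the intended route, the proof is: carry out these two rank computations and observe the ranks equal $30$ and $18$ respectively. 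One can also cross-check the dendriform number against Theorem \ref{normaltheorem}: the Gröbner--Shirshov basis there shows the normal L-monomials of degree $3$ (equivalently $\dim\TT_3$ after the appropriate identification) have a known count, and $48-18=30$ should match the dimension of the degree-$3$ component of the free dendriform dialgebra on three multilinear generators — a useful sanity check but not logically necessary.

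The only genuine obstacle is bookkeeping rather than mathematics: one must be careful that the inner-associativity relation \eqref{L-identity} — which is shared by L-algebras and hence by dendriform dialgebras — is counted once, and that no spurious dependencies are introduced by writing a relation and its mirror image separately. A small subtlety is that the dendriform relations \eqref{dendriform1}-\eqref{dendriform2} each have a monomial appearing twice on the right-hand side (for instance $x\prec(y\succ z)$ enters both the first dendriform relation and the L-relation after reassociation), so the $18$ generator vectors are \emph{not} obviously independent a priori; it is precisely the rank computation that confirms there are no further collapses. I expect the calculation to confirm the stated dimensions with no surprises, so that the lemma follows immediately.
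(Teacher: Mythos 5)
Your overall method is exactly the paper's: the paper offers nothing beyond ``can be proved by direct calculation,'' and your plan --- write each defining relation as a vector in the $48$-dimensional space $\mathsf{BB}_3$, apply the six permutations of the variables, and row-reduce the resulting $30\times 48$ and $18\times 48$ matrices --- is precisely that calculation. The dendriform half is stated correctly (the three relations \eqref{L-identity}, \eqref{dendriform1}, \eqref{dendriform2}, rank $18$), and your cross-check $48-18=30=5\cdot 3!$ against the count of multilinear normal DD-monomials in degree $3$ is sound. In fact the computation is lighter than you suggest: since every relation has the variables in the same order on both sides, $\mathsf{BB}_3$ splits into six $8$-dimensional blocks indexed by $\sigma\in S_3$, each permuted relation lives in a single block, and within a block the dendriform (resp.\ diassociative) relation vectors are supported on essentially disjoint sets of association types, so the independence, hence the ranks $18$ and $30$, can even be checked by hand.

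There is, however, a concrete error in your list of diassociative relations: the third axiom should be the inner associativity $(x\circ_2 y)\circ_1 z \equiv x\circ_2(y\circ_1 z)$, i.e.\ $(x\succ y)\prec z \equiv x\succ(y\prec z)$ in your dictionary $\circ_1=\,\prec$, $\circ_2=\,\succ$, whereas you wrote $(x\circ_1 y)\circ_2 z \equiv x\circ_2(y\circ_1 z)$, which is not one of Loday's axioms. With your list the association type $(x\succ y)\prec z$ never appears, and combining your third and fourth relations forces $x\succ(y\prec z)\equiv x\succ(y\succ z)$, which is false in the free associative dialgebra (the two monomials have different centers and are distinct basis elements). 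Coincidentally, the block argument above still gives rank $30$ for your five relations, but that would be the dimension of a different $S_3$-submodule, not of $\mathsf{Dias}_3$; since $\mathsf{Dias}_3$ itself (not merely its dimension) enters the sums and intersections of Lemma \ref{badlemma}, the relation list must be corrected before your computation proves the stated lemma. With the corrected third axiom the rest of your plan goes through unchanged.
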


\begin{definition}
In an $\Omega$-algebra, 
the \textbf{pre-Lie product} $a \bullet_l b$ and the \textbf{pre-Jordan product} $a \bullet_j b$
are the following binary operations:
  \[
  a \bullet_l b = a \circ_1 b - b \circ_2 a,
  \qquad
  a \bullet_j b = a \circ_2 b + b \circ_1 a.  
  \]
The \textbf{pre-Lie triple products} are
  \[
  ( a \bullet_l b ) \bullet_l c, \qquad a \bullet_l ( b \bullet_l c);
  \]
similarly, the \textbf{pre-Jordan triple products} are
  \[
  ( a \bullet_j b ) \bullet_j c, \qquad a \bullet_j ( b \bullet_j c).
  \]
\end{definition}

\begin{lemma} \label{badlemma}
Let $\mathrm{TLie}_3$ and $\mathrm{TJor}_3$ be the $S_3$-submodules of $\mathsf{BB}_3$ generated by 
the pre-Lie and pre-Jordan triple products, respectively.
We have 
  \begin{alignat*}{3}
  \dim \mathrm{TLie}_3 &= 12,
  &\quad
  \dim( \, \mathsf{Dias}_3 + \mathrm{TLie}_3 \, ) &= 39, 
  &\quad
  \dim( \, \mathsf{Dias}_3 \cap \mathrm{TLie}_3 \, ) &= 3,
  \\
  &&
  \dim( \, \mathsf{Dend}_3 + \mathrm{TLie}_3 \, ) &= 30, 
  &\quad
  \dim( \, \mathsf{Dend}_3 \cap \mathrm{TLie}_3 \, ) &= 0,
  \\
  \dim \mathrm{TJor}_3 &= 12,
  &\quad
  \dim( \, \mathsf{Dias}_3 + \mathrm{TJor}_3 \, ) &= 42, 
  &\quad
  \dim( \, \mathsf{Dias}_3 \cap \mathrm{TJor}_3 \, ) &= 0,
  \\
  &&
  \dim( \, \mathsf{Dend}_3 + \mathrm{TJor}_3 \, ) &= 30, 
  &\quad
  \dim( \, \mathsf{Dend}_3 \cap \mathrm{TJor}_3 \, ) &= 0,
  \end{alignat*}
Furthermore,
  \[
  \mathsf{Dias}_3 + \mathrm{TLie}_3 + \mathrm{TJor}_3 
  = 
  \mathsf{Dias}_3 + \mathrm{TJor}_3,
  \]  
so that in the free diassociative dialgebra, the $S_3$-submodule generated by the pre-Lie triple products
is a submodule of that generated by the pre-Jordan triple products:
  \[
  ( \mathsf{Dias}_3 + \mathrm{TLie}_3 ) / \mathsf{Dias}_3
  \subset
  ( \mathsf{Dias}_3 + \mathrm{TJor}_3 ) / \mathsf{Dias}_3.
  \]
Hence in this case, the pre-Lie triple products can be expressed in terms of the pre-Jordan triple products,
generalizing equation \eqref{LJTSequation}.
On the other hand,
  \[
  \dim( \, \mathsf{Dend}_3 + \mathrm{TLie}_3 + \mathrm{TJor}_3 \, ) = 36,
  \]  
so that in the free dendriform dialgebra, the two submodules $\mathrm{TLie}$ and $\mathrm{TJor}$ intersect,
but there is no containment between them,
  \[
  ( \mathsf{Dend}_3 + \mathrm{TLie}_3 ) / \mathsf{Dias}_3
  \not \subset
  ( \mathsf{Dend}_3 + \mathrm{TJor}_3 ) / \mathsf{Dias}_3,
  \]
analogous to equation \eqref{LJTSequation}.
\end{lemma}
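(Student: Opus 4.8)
The statement is a collection of dimension counts in the $48$-dimensional space $\mathsf{BB}_3$ together with two structural consequences, and the plan is to reduce everything to rank computations of explicit matrices over $\mathbb{F}$. First I would fix once and for all the ordered basis of $\mathsf{BB}_3$ displayed above, so that every element of $\mathsf{BB}_3$ is identified with a vector in $\mathbb{F}^{48}$. For each of the four submodules I would then write down a spanning set consisting of the full $S_3$-orbit of each generator: the defining relations of associative dialgebras for $\mathsf{Dias}_3$, the three defining relations \eqref{L-identity}, \eqref{dendriform1}, \eqref{dendriform2} for $\mathsf{Dend}_3$; and, for $\mathrm{TLie}_3$ resp.\ $\mathrm{TJor}_3$, the two triple products $(a \bullet_l b) \bullet_l c$, $a \bullet_l (b \bullet_l c)$ resp.\ $(a \bullet_j b) \bullet_j c$, $a \bullet_j (b \bullet_j c)$, each first expanded into the $\circ_1,\circ_2$ basis using $a \bullet_l b = a \circ_1 b - b \circ_2 a$ and $a \bullet_j b = a \circ_2 b + b \circ_1 a$, so that each generator becomes a sum of four signed basis monomials. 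Row-reducing the resulting $(6k) \times 48$ matrix, where $k$ is the number of generators, gives $\dim$ of the submodule; in particular this yields $\dim \mathrm{TLie}_3 = \dim \mathrm{TJor}_3 = 12$.

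All the sums are then handled by stacking spanning sets: $\dim(U+V)$ is the rank of the matrix obtained by concatenating the rows for $U$ and for $V$, and likewise for the triple sums $\mathsf{Dias}_3 + \mathrm{TLie}_3 + \mathrm{TJor}_3$ and $\mathsf{Dend}_3 + \mathrm{TLie}_3 + \mathrm{TJor}_3$. This produces the values $39$, $30$, $42$, $36$ recorded in the statement, and every intersection follows from Grassmann's identity $\dim(U \cap V) = \dim U + \dim V - \dim(U+V)$, using $\dim \mathsf{Dias}_3 = 30$ and $\dim \mathsf{Dend}_3 = 18$ from the preceding lemma. Since the matrices are small one may carry this out either exactly over $\mathbb{Q}$, or, as elsewhere in the paper, over a prime field $\mathbb{F}_p$ with $p > 3$, where $\mathbb{F}S_3$ is semisimple and the computed rank agrees with the characteristic-zero rank.

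The two structural conclusions drop out of the triple-sum counts. Because $\mathsf{Dias}_3 + \mathrm{TJor}_3 \subseteq \mathsf{Dias}_3 + \mathrm{TLie}_3 + \mathrm{TJor}_3$ always, and both sides have dimension $42$, they coincide; hence $\mathsf{Dias}_3 + \mathrm{TLie}_3 \subseteq \mathsf{Dias}_3 + \mathrm{TJor}_3$, which after quotienting by $\mathsf{Dias}_3$ is exactly the asserted containment of modules, generalizing \eqref{LJTSequation}. In the dendriform case, the analogous containment would force $\mathsf{Dend}_3 + \mathrm{TLie}_3 + \mathrm{TJor}_3 = \mathsf{Dend}_3 + \mathrm{TJor}_3$ and hence $36 = 30$, which is absurd, so no such containment holds. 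There is no conceptual obstacle anywhere in this argument; the only thing that needs care is the bookkeeping, namely verifying that each spanning set really generates the named $S_3$-submodule (in particular that the triple products are correctly rewritten in terms of $\circ_1,\circ_2$ before taking the $S_3$-orbit) and that one single fixed basis ordering of $\mathsf{BB}_3$ is used throughout, so that concatenating matrices genuinely computes the dimension of a sum of submodules.
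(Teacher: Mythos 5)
Your proposal is correct and matches the paper's approach: the paper simply states that this lemma "can be proved by direct calculation," and your plan --- fixing the $48$-dimensional monomial basis of $\mathsf{BB}_3$, spanning each submodule by the $S_3$-orbits of its generators, computing ranks of stacked coefficient matrices, and recovering the intersections via $\dim(U \cap V) = \dim U + \dim V - \dim(U+V)$ --- is exactly that calculation made explicit, with the two structural conclusions correctly deduced from the triple-sum dimensions. No gaps.
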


The analogues of Lie and Jordan triple systems in the setting of associative dialgebras have been studied
by Bremner and S\'anchez-Ortega \cite{BSO2011} and Bremner, Felipe and S\'anchez-Ortega \cite{BFSO2012}.
Those papers demonstrated a connection between the resulting structures, Leibniz triple systems and 
Jordan triple disystems, analogous to the connection in the classical case based on equation \eqref{LJTSequation}.
However, from the last statement of Lemma \ref{badlemma}, we see that no such connection is to be expected
in the setting of dendriform dialgebras.


\section{Pre-Lie triple systems} \label{preliesection}

In the rest of this paper, we specialize to the case of the free dendriform dialgebra, and 
hence we can simplify our notation.
The pre-Lie product will be denoted $a \cdot b = a \prec b - b \succ a$,
and the pre-Lie triple products will be denoted
  \begin{align*}
  [ a, b, c ]_1
  &=
  ( a \cdot b ) \cdot c
  =
  ( a \prec b ) \prec c - ( b \succ a ) \prec c - c \succ ( a \prec b ) + c \succ ( b \succ a ),
  \\
  [ a, b, c ]_2
  &=
  a \cdot ( b \cdot c )
  =
  a \prec ( b \prec c ) - a \prec ( c \succ b ) - ( b \prec c ) \succ a + ( c \succ b ) \succ a.
  \end{align*}

\begin{definition}
The \textbf{right-symmetric identity} says that the associator is invariant under transposition 
of the second and third arguments:
  \[
  ( a \cdot b ) \cdot c - a \cdot ( b \cdot c ) \equiv ( a \cdot c ) \cdot b - a \cdot ( c \cdot b ).
  \]
Using trilinear operations, this identity is the \textbf{ternary right-symmetric identity}:
  \[
  [a,b,c]_1 - [a,b,c]_2 \equiv [a,c,b]_1 - [a,c,b]_2.
  \]
\end{definition}

\begin{lemma} \label{PL3}
The pre-Lie product in every dendriform dialgebra satisfies the right-symmetric identity.
Every identity in degree 3 satisfied by the pre-Lie triple products
in every dendriform dialgebra follows from the ternary right-symmetric identity.
\end{lemma}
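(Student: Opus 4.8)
The plan is to verify the first assertion by a direct expansion and to establish the second via an $S_3$-module rank computation. For the first part, I would expand both sides of the ternary right-symmetric identity using the definitions $[a,b,c]_1 = (a\prec b)\prec c - (b\succ a)\prec c - c\succ(a\prec b) + c\succ(b\succ a)$ and the analogous formula for $[a,b,c]_2$, obtaining an element of $\DD_3$ written in terms of dialgebra monomials; I would then rewrite this element in terms of normal DD-monomials using the rewriting procedure of Bremner and Madariaga, or equivalently reduce modulo the Gr\"obner--Shirshov basis $S$ of Theorem~\ref{normaltheorem}, and check that the result is zero. In practice this is a finite bookkeeping exercise: every term is of degree $3$, so there are only a handful of normal DD-types to track, and the inner associativity relation~\eqref{L-identity} together with \eqref{dendriform1}--\eqref{dendriform2} suffices to collapse the difference to $0$.

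For the second part, the statement is that $\ker \mathcal{E}_3 \subseteq \TT_3$ coincides, as an $S_3$-submodule, with the submodule generated by the single element $g(a,b,c) = [a,b,c]_1 - [a,b,c]_2 - [a,c,b]_1 + [a,c,b]_2$. First I would record that $\TT_3$ has dimension $2\cdot 6 = 12$ (two TT-types in degree $3$, namely $[x,y,z]_1$ and $[x,y,z]_2$, each with $3! = 6$ permutations), and that by the first part $g \in \ker\mathcal{E}_3$, so the submodule $\langle g\rangle$ it generates is contained in $\ker\mathcal{E}_3$. Next I would compute the rank of the expansion matrix $E_3$ representing $\mathcal{E}_3 \colon \TT_3 \to \DD_3$ in the ordered bases; since $\DD_3$ has a known basis of normal DD-monomials, this is a concrete finite matrix over $\mathbb{F}$ (or over a prime field $\mathbb{F}_p$ with $p > 3$, since $S_3$ is semisimple there). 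From $\operatorname{rank} E_3$ one reads $\dim\ker\mathcal{E}_3 = 12 - \operatorname{rank} E_3$. Finally I would compute $\dim\langle g\rangle$ by forming the $6$ permutations $\sigma\cdot g$ for $\sigma\in S_3$ and row-reducing; if this dimension equals $\dim\ker\mathcal{E}_3$, then the containment $\langle g\rangle \subseteq \ker\mathcal{E}_3$ is an equality, which is the claim.

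To make the identification canonical rather than merely dimensional, I would also carry out the comparison at the level of $S_3$-isotypic components: decompose both $\ker\mathcal{E}_3$ and $\langle g\rangle$ into their multiplicities of the trivial, sign, and $2$-dimensional standard representations of $S_3$ (for instance by computing ranks of the images of the corresponding central idempotents, or by the representation-theoretic technique used throughout the paper), and check that the multiplicity vectors agree. Since a submodule of a semisimple module that has the same isotypic multiplicities as the ambient module must equal it, this upgrades the numerical coincidence to the desired module equality.

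The main obstacle I anticipate is purely computational bookkeeping in the first part: the pre-Lie triple products $[\cdot,\cdot,\cdot]_1$ and $[\cdot,\cdot,\cdot]_2$ each expand into four dialgebra monomials with signs, and after substituting into $g$ and permuting one has sixteen signed terms over several DD-types that must be reduced to normal form without arithmetic slips. This is routine but error-prone, so I would do it by machine (as the authors do) and sanity-check by evaluating $g$ on a few small dendriform dialgebras; once the first part is secure, the rank comparisons in the second part are immediate.
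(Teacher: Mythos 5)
Your proposal is correct and follows essentially the same route as the paper: verify the right-symmetric identity by expanding into normal DD-monomials, then do linear algebra on the $30\times 12$ expansion matrix $E_3$ to identify $\ker\mathcal{E}_3$ with the $S_3$-submodule generated by the ternary right-symmetric identity (the paper exhibits the nullspace basis explicitly as the three permutations of $\mathrm{TRS}$, while you compare dimensions after establishing containment, which is equivalent; your isotypic-component check is redundant once containment plus equal dimension is in hand).
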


\begin{proof}
The first statement can be verified by direct calculation.
For the second, we note that applying 6 permutations of 3 variables to $[a,b,c]_i$ ($i=1,2$) 
gives 12 monomials forming an ordered basis of $\TT_3$.
There are 8 association types in the free nonassociative dialgebra, 
$(a \circ_1 b) \circ_2 c$ and $a \circ_1 (b \circ_2 c)$
where $\circ_1, \circ_2 \in \{ \prec, \succ \}$.
The dendriform identities \eqref{L-identity}-\eqref{dendriform2} eliminate 3 types, leaving 5 normal DD-types:
  \[
  a \prec ( b \prec c ), \quad   
  a \succ ( b \prec c ), \quad
  a \prec ( b \succ c ), \quad
  a \succ ( b \succ c ), \quad
  ( a \succ b ) \succ c.
  \]
Applying 6 permutations gives 30 monomials forming an ordered basis of $\DD_3$.
The identities relating the pre-Lie triple products are the nonzero elements of 
the kernel of the expansion map $\mathcal{E}_3\colon \TT_3 \to \DD_3$.
In the expansion matrix $E_3$, the $(i,j)$ entry is the coefficient of the $i$-th DD-monomial in the
normalized expansion of the $j$-th TT-monomial.
Each expansion has 4 terms, but normalization gives 5 terms:
  \begin{align*}
  [ a, b, c ]_1
  &=
  a \prec (b \prec c) + a \prec (b \succ c)  - b \succ (a \prec c) - c \succ (a \prec b) + c \succ (b \succ a),
  \\
  [ a, b, c ]_2
  &=
  a \prec (b \prec c) - a \prec (c \succ b) + (b \succ c) \succ a - b \succ (c \succ a) + (c \succ b) \succ a.
  \end{align*}
A basis for the nullspace of $E_3$ consists of the coefficient vectors of the identities,
  \begin{align*}
  &
  [a, b, c]_1 - [a, c, b]_1 - [a, b, c]_2 + [a, c, b]_2 \equiv 0, \\
  &
  [b, a, c]_1 - [b, c, a]_1 - [b, a, c]_2 + [b, c, a]_2 \equiv 0, \\
  &
  [c, a, b]_1 - [c, b, a]_1 - [c, a, b]_2 + [c, b, a]_2 \equiv 0,
  \end{align*}
which are the permutations of the ternary right-symmetric identity.
\end{proof}

\begin{lemma} \label{PLlifted5lemma}
The subspace $O_5 \subset \TT_5$ consisting of identities which follow from the ternary right-symmetric identity 
has dimension 630.
\end{lemma}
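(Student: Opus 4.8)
The plan is to realize $O_5$ as the row space of an explicit matrix and reduce the lemma to a rank computation. By Lemma~\ref{PL3} the space of degree-$3$ identities of the pre-Lie triple products is the $S_3$-module generated by the single element
\[
r = [x_1,x_2,x_3]_1 - [x_1,x_2,x_3]_2 - [x_1,x_3,x_2]_1 + [x_1,x_3,x_2]_2 \in \TT_3 ,
\]
and $O_5$ is by definition the $S_5$-submodule of $\TT_5$ generated by all consequences of $r$ in degree $5$. In the free $\TT$-algebra the only ways to pass from degree $3$ to degree $5$ are to substitute a degree-$3$ ternary monomial for one of the three variables of $r$, or to insert $r$ into one of the three arguments of a new ternary operation; hence these consequences are spanned by the $2(3{+}3) = 12$ liftings $\ell_1, \dots, \ell_{12}$ of $r$ defined in \S\ref{theoreticalsection}. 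Therefore $O_5$ is spanned by the $12 \cdot 5! = 1440$ vectors $\sigma \cdot \ell_k$ ($\sigma \in S_5$, $1 \le k \le 12$), each written in the ordered basis of the $1440$ multilinear $\TT$-monomials of degree $5$ produced by Algorithm~\ref{TTtypesalgorithm}.

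First I would expand the $12$ liftings into coordinate vectors in $\mathbb{F}^{1440}$, form the $1440 \times 1440$ matrix $M$ with rows $\sigma \cdot \ell_k$, and compute $\operatorname{rank} M$ by Gaussian elimination; the lemma is exactly the statement $\operatorname{rank} M = 630$. To keep this within memory I would work over a prime field $\mathbb{F}_p$ with $p > 5$, so that $\mathbb{F}_p S_5$ is semisimple and the module-theoretic picture is the same as in characteristic $0$. Since reducing an integer matrix modulo $p$ can only lower the rank, a value of $630$ over $\mathbb{F}_p$ already gives $\operatorname{rank}_{\mathbb{Q}} M \ge 630$, and repeating the computation for a second prime, or an exact rational computation, confirms $\dim O_5 = 630$ in characteristic $0$.

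A more economical route, which exploits the representation theory of $S_5$, replaces this single large rank computation by seven small ones. Since the monomials of each fixed $\TT$-type form a regular representation of $S_5$, we have $\TT_5 \cong (\mathbb{F} S_5)^{12}$, so the isotypic component of $\TT_5$ for a partition $\lambda \vdash 5$ consists of $12 d_\lambda$ copies of the irreducible $S^\lambda$, where $d_\lambda = \dim S^\lambda$. Under the Wedderburn projections each $\ell_k$ yields in the $\lambda$-block a $d_\lambda \times 12 d_\lambda$ matrix; stacking these over $k = 1, \dots, 12$ produces a $12 d_\lambda \times 12 d_\lambda$ matrix $R^{(\lambda)}$ whose rank is the multiplicity of $S^\lambda$ in $O_5$, so that $\dim O_5 = \sum_{\lambda \vdash 5} d_\lambda \operatorname{rank} R^{(\lambda)}$. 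One then checks that the seven ranks, for matrices of sizes $12, 48, 60, 72, 60, 48, 12$, combine with the weights $d_\lambda$ to give $630$.

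The work here is computational rather than conceptual, and the only points requiring care are (i) the claim that the $12$ liftings generate all degree-$5$ consequences of $r$, which is the standard description of the multilinear part of a $T$-ideal in a free multioperator algebra recalled in \S\ref{theoreticalsection}, and (ii) the passage between characteristic $p$ and characteristic $0$, for which the semisimplicity of $\mathbb{F}_p S_5$ when $p > 5$, together with the use of more than one prime or an exact rational check, is enough. I expect the size of the degree-$5$ matrices, not any subtlety in the argument, to be the main obstacle, and this is precisely what the modular arithmetic and the block decomposition above are designed to overcome.
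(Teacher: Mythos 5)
Your proposal takes essentially the same route as the paper: span $O_5$ by the $120$ permutations of the $12$ liftings of the ternary right-symmetric identity, assemble the resulting $1440 \times 1440$ coefficient matrix with respect to the ordered basis of multilinear TT-monomials, and verify that its rank is $630$ by Gaussian elimination over a small prime field (the paper uses $p = 101$). Your additional remarks --- the care about passing from characteristic $p$ back to characteristic $0$, and the optional $S_5$-isotypic block decomposition, which the paper deploys only in degree $7$ --- are sound refinements rather than a different argument.
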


\begin{proof}
There are 12 TT-types in degree 5:
  \[
  [ [ a, b, c ]_i, d, e ]_j,
  \quad
  [ a, [ b, c, d ]_i, e ]_j,
  \quad
  [ a, b, [ c, d, e ]_i ]_j,
  \quad
  i,j \in \{1,2\}.
  \]
Applying 120 permutations of 5 variables gives $\dim \TT_5 = 1440$.
We write the ternary right-symmetric identity as $\mathrm{TRS}(a,b,c) \equiv 0$ where
  \[
  \mathrm{TRS}(a,b,c) = \big( \, [a,b,c]_1 - [a,c,b]_1 \big) - \big( \, [a,b,c]_2 - [a,c,b]_2 \big).
  \]
There are 12 liftings of $\mathrm{TRS}(a,b,c)$ to degree 5:
  \begin{align*} 
  &
  \mathrm{TRS}( \, [ \, a, \, d, \, e \, ]_i, \, b, \, c \, ), \qquad 
  \mathrm{TRS}( \, a, \, [ \, b, \, d, \, e \, ]_i, \, c \, ), \qquad 
  \mathrm{TRS}( \, a, \, b, \, [ \, c, \, d, \, e \, ]_i \, ),
  \\
  & 
  [ \, \mathrm{TRS}( \, a, \, b, \, c \, ), \, d, \, e \, ]_i, \qquad 
  [ \, d, \, \mathrm{TRS}( \, a, \, b, \, c \, ), \, e \, ]_i, \qquad 
  [ \, d, \, e, \, \mathrm{TRS}( \, a, \, b, \, c \, ) \, ]_i.
  \end{align*}
Applying 120 permutations gives 1440 elements of $\TT_3$ 
which span the subspace of identities implied by $\mathrm{TRS}(a,b,c) \equiv 0$.
This subspace is the row space of a $1440 \times 1440$ matrix, 
which has rank 630 using modular arithmetic with $p = 101$.
\end{proof}

\begin{proposition} \label{PLTS5proposition}
The kernel $K_5 \subset \TT_5$ of the expansion map $\mathcal{E}_5$ has dimension 815, 
and hence the quotient module $K_5 / O_5$ has dimension 185.
\end{proposition}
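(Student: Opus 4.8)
The plan is to reduce the proposition to a single rank computation and then combine it with Lemma~\ref{PLlifted5lemma}. First I would fix ordered bases: the $12$ TT-types of degree $5$ listed in the proof of Lemma~\ref{PLlifted5lemma}, together with the $120$ permutations of $a,b,c,d,e$, give a basis of $\TT_5$ with $\dim \TT_5 = 1440$; running the DD-type algorithm in degree $5$ produces $42$ normal DD-types, which with the $120$ permutations give a basis of $\DD_5$ with $\dim \DD_5 = 5040$. Next I would build the $5040 \times 1440$ expansion matrix $E_5$ one column at a time: for each basis TT-monomial of degree $5$, replace each ternary operation symbol $[\ast,\ast,\ast]_i$ by $\mu_i$, expand into the free dialgebra, and rewrite the result in the normal DD-monomial basis using the Gr\"obner--Shirshov basis of Theorem~\ref{normaltheorem} (equivalently the rewriting algorithm of Bremner and Madariaga~\cite{BremnerMadariaga2013}); the coefficient vector of this normalized expansion is the corresponding column of $E_5$.

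I would then compute $\mathrm{rank}\, E_5$ by row reduction over $\mathbb{F}_{101}$; the claim is that $\mathrm{rank}\, E_5 = 625$, so that $\dim K_5 = 1440 - 625 = 815$ by rank--nullity. For the quotient it remains only to check that $O_5 \subseteq K_5$. This holds because $\mathcal{E}_5$ sends each lifting of $\mathrm{TRS}(a,b,c)$ to the expression obtained by performing the same substitution or outer multiplication in $\mathcal{E}_3(\mathrm{TRS}(a,b,c))$, and $\mathcal{E}_3(\mathrm{TRS})$ is the zero element of $\DD_3$ since the pre-Lie product satisfies the right-symmetric identity in every dendriform dialgebra (Lemma~\ref{PL3}); hence every lifting, and every permutation of a lifting, lies in $\ker \mathcal{E}_5$, so $O_5 \subseteq K_5$. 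One can also confirm this directly by checking that each row of the $1440 \times 1440$ lifting matrix of Lemma~\ref{PLlifted5lemma}, viewed as an element of $\TT_5$, is annihilated by $E_5$. Since $\dim O_5 = 630$ by that lemma, we conclude $\dim(K_5/O_5) = 815 - 630 = 185$.

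The mathematics here is elementary linear algebra; the difficulty is computational. The main obstacle is the size and correctness of $E_5$: the DD-type enumeration and the dialgebra rewriting rules must be implemented exactly, since a single error propagates through every column, and the row reduction of a matrix of this size must be organized to fit in memory, which is why modular arithmetic with a prime $p = 101$ exceeding the degree is used. As a safeguard against an unlucky prime artificially lowering the rank, I would repeat the computation with a second prime, confirming that $625$ and $630$, and hence $815$ and $185$, are the true characteristic-$0$ values.
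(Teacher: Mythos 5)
Your proposal follows essentially the same route as the paper: construct the $5040 \times 1440$ expansion matrix $E_5$ over $\mathbb{F}_{101}$ in the normal DD-monomial basis, compute its rank, apply rank--nullity to get $\dim K_5$, and subtract $\dim O_5 = 630$ from Lemma~\ref{PLlifted5lemma}, the containment $O_5 \subseteq K_5$ being automatic because the kernel is an $S_5$-submodule and $\mathrm{TRS}$ expands to zero. One remark: the paper's proof asserts that $E_5$ has ``rank 815,'' which cannot be taken literally (it would force $\dim K_5 = 625 < 630 = \dim O_5$, and $815 + 815 \neq 1440$); your value $625$ is the one consistent with the stated kernel dimension $815$ and with the $815$ nullspace basis vectors used in the proof of Theorem~\ref{PL5}, so the figure in the paper is evidently a slip for the nullity rather than an error in your argument.
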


\begin{proof}
In degree 5 there are $14 \cdot 2^4 = 224$ association types for two binary operations, 
which can be reduced to a set of 42 normal DD-types.
Applying 120 permutations of 5 variables gives 5040 monomials forming a basis of $\DD_5$.
We expand each of the 12 TT-types; each expansion has 16 terms, 
but after replacing each term by a linear combination of normal DD-monomials, 
the 12 expansions have respectively 41, 33, 41, 33, 32, 29, 44, 35, 27, 36, 35, 46 terms.
The expansion matrix $E_5$ has size $5040 \times 1440$ and rank 815 using modular arithmetic with $p = 101$. 
\end{proof}

It follows from Proposition \ref{PLTS5proposition} that there are new identities in degree 5.
A set of $S_5$-module generators for these new identities is given in the next theorem.

\begin{theorem} \label{PL5}
Every multilinear identity of degree 5 satisfied by the pre-Lie triple products
$[a,b,c]_1$ and $[a,b,c]_2$ in the free dendriform dialgebra is a consequence of the liftings
of $\mathrm{TRS}(a,b,c) \equiv 0$ and the following 3 identities:
  \begin{align*}
  &
  [ [ a, b, c ]_1, d, e ]_2 
  - [ [ a, d, e ]_2, b, c ]_1 
  + [ a, [ d, e, b ]_1, c ]_1 
  - [ a, [ b, d, e ]_2, c ]_1 
  \\
  &\quad
  + [ a, b, [ d, e, c ]_1 ]_1 
  - [ a, b, [ c, d, e ]_2 ]_1
  \equiv 0,
  \\
  & 
  [ [ a, b, c ]_1, d, e ]_1 
  - [ [ a, d, b ]_1, c, e ]_1 
  + [ [ a, d, c ]_1, b, e ]_1 
  - [ [ a, c, b ]_1, d, e ]_1 
  \\
  &\quad
  - [ a, [ b, c, d ]_1, e ]_1 
  + [ a, [ d, b, c ]_1, e ]_1 
  - [ a, [ d, c, b ]_1, e ]_1 
  + [ a, [ c, b, d ]_1, e ]_1 
  \equiv 0,
  \\
  &
  [ [ a, b, c ]_1, d, e ]_2 
  - [ [ a, d, e ]_2, b, c ]_1 
  + [ [ a, d, e ]_2, b, c ]_2 
  - [ [ a, b, c ]_2, d, e ]_2 
  \\
  &\quad
  + [ a, [ d, e, b ]_1, c ]_1 
  + [ a, [ d, e, c ]_1, b ]_1 
  - [ a, [ b, c, e ]_1, d ]_1 
  - [ a, [ d, e, c ]_1, b ]_2 
  \\
  &\quad
  + [ a, [ b, c, e ]_1, d ]_2 
  - [ a, [ b, d, e ]_2, c ]_1 
  - [ a, [ c, d, e ]_2, b ]_1 
  - [ a, [ d, b, c ]_2, e ]_2 
  \\
  &\quad
  + [ a, [ b, d, e ]_2, c ]_2 
  + [ a, [ c, d, e ]_2, b ]_2 
  + [ a, d, [ b, c, e ]_1 ]_1 
  - [ a, d, [ e, b, c ]_2 ]_2
  \equiv 0.
  \end{align*} 
\end{theorem}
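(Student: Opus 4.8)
The plan is to reduce the theorem to a finite computation in the $S_5$-module $\TT_5$, using the representation theory of $S_5$ to make the linear algebra tractable and to certify the claimed generating set. By Lemma~\ref{PLlifted5lemma} the subspace $O_5$ of liftings of $\mathrm{TRS}$ has dimension $630$, and by Proposition~\ref{PLTS5proposition} the kernel $K_5 = \ker \mathcal{E}_5$ has dimension $815$, so the space of ``genuinely new'' degree-5 identities is the $185$-dimensional quotient $K_5/O_5$. The theorem asserts that the three displayed elements, call them $g_1, g_2, g_3 \in K_5$, together with $O_5$, generate all of $K_5$ as an $S_5$-module. The forward direction (these are identities) is immediate: one checks $\mathcal{E}_5(g_i) = 0$ by expanding each $g_i$ into normal DD-monomials using the rewriting of Bremner--Madariaga, exactly as in the proof of Lemma~\ref{PL3}; this is a routine but large verification that $g_i$ lies in the column-nullspace of $E_5$.

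The substantive direction is that $O_5 + \langle g_1, g_2, g_3 \rangle_{S_5} = K_5$. First I would form the $S_5$-submodule $U$ generated by $g_1, g_2, g_3$ together with all $12$ liftings of $\mathrm{TRS}$: this means writing down the $120$ permutations of each of these $15$ generators, giving at most $1800$ vectors in the $1440$-dimensional space $\TT_5$, and computing the rank of the resulting matrix by row reduction modulo $p = 101$. The claim is that this rank equals $815$, matching $\dim K_5$; since $U \subseteq K_5$ by the forward direction, equality of dimensions forces $U = K_5$, which is precisely the statement. One also verifies $\dim(U \cap O_5$'s row space vs.\ the full span$)$ is consistent, i.e.\ that the three $g_i$ contribute exactly the missing $185$ dimensions and none of them is already a consequence of the liftings; this shows the three identities are genuinely needed. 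To organize the computation and to argue independence cleanly, I would work isotypic-component by isotypic-component: decompose $\TT_5$, $O_5$, and $K_5$ into their $S_5$-irreducible constituents (using the partitions of $5$), and in each block compare the rank contributed by the images of $g_1, g_2, g_3$ against the codimension of $O_5$ in $K_5$ in that block. This both reduces matrix sizes dramatically and yields the ``$3$ independent identities'' claim as a byproduct.

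The main obstacle is computational bookkeeping rather than conceptual: correctly normalizing the $16$-term raw expansions of each degree-5 TT-type into the $42$ normal DD-types and then into the $5040$-dimensional monomial basis of $\DD_5$, and doing the analogous normalization for the $g_i$'s so that the nullspace membership check is exact. A secondary subtlety is confirming that the modular rank computations ($p = 101$) agree with the characteristic-zero ranks; since $101 > 7 \ge n$ and $\mathbb{F}S_5$ is semisimple in this case, a rank computed mod $p$ cannot exceed the rational rank, and agreement of the two specific numbers $630$ and $815$ with the predicted module dimensions makes a rank drop essentially impossible --- but in principle one should either repeat with a second prime or appeal to the semisimplicity argument sketched in the introduction. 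Assuming these checks pass, the equality $\dim \langle O_5, g_1, g_2, g_3\rangle_{S_5} = 815 = \dim K_5$ completes the proof.
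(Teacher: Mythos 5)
Your proposal is correct and follows essentially the same route as the paper: both reduce the theorem to modular ($p=101$) rank computations in the $1440$-dimensional space $\TT_5$, comparing the $S_5$-module spanned by the $12$ liftings of $\mathrm{TRS}$ together with the three new identities (rank $815$) against the nullspace of $E_5$ (dimension $815$ from Proposition~\ref{PLTS5proposition}), so that the inclusion plus equal dimensions forces equality. The only difference is one of direction rather than method: the paper augments the $630$-dimensional lifted space with the sorted canonical nullspace basis vectors one at a time to \emph{discover} which vectors generate $K_5/O_5$ (finding four rank-increasing vectors, one of which is redundant), whereas you \emph{verify} the three stated generators directly, which is logically equivalent for proving the stated theorem.
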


\begin{proof}
We construct a $1560 \times 1440$ matrix $M$, with a $1440 \times 1440$ upper block of 
and a $120 \times 1440$ lower block, initialized to zero.
For each of the 12 liftings of $\mathrm{TRS}(a,b,c)$,
we fill the lower block of $M$ with the coefficient vectors
obtained by applying all 120 permutations,
and then compute the row canonical form (RCF).
We retain the results of this computation in the first 630 rows of $M$.

We compute the RCF of the expansion matrix $E_5$, and extract the canonical basis of its nullspace,
by successively setting the free variables equal to the standard basis vectors and solving for the leading variables.
Using symmetric representatives modulo $p = 101$, the components of the nullspace basis 
vectors are in $\{ -1, 0, 1 \}$.
Interpreting these as integers, we compute the squared Euclidean length of each nullspace basis
vector, and sort the vectors by increasing length.

For each of the 815 vectors in the sorted list, we fill the lower block of $M$ with 
the coefficient vectors obtained by applying all 120 permutations of the variables, and then compute the RCF.
Four vectors increase the rank: first to 745, then to 770, then to 800, and finally to 815.
Further computations show that vector 3 belongs to the $S_5$-module generated by vectors 1, 2, 4;
this is the only dependence relation among the four vectors. 
Vectors 1, 2, 4 are the coefficient vectors of the identities in the statement of this theorem,
which are generators of the $S_5$-module $K_5 / O_5$.
\end{proof}

\begin{remark}
In principle, the identities of Theorem \ref{PL5} could be checked directly by expanding the terms 
in the free dendriform dialgebra and computing the normal form of each term as a linear combination of
normal DD-monomials.
However, even for the first identity, with only 6 terms, the expansions produce 96 terms, 
and the normalizations have $33 + 41 + 32 + 44 + 27 + 35 = 212$ terms.
Hence this verification is only practical with the assistance of a computer algebra system.
\end{remark}

\begin{theorem} \label{theoremPL7}
Every polynomial identity of degree 7 satisfied by the pre-Lie triple products in every dendriform dialgebra
follows from the identities of lower degree.
\end{theorem}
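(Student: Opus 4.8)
The plan is to follow the same linear-algebra strategy used in the proofs of Lemma \ref{PLlifted5lemma} and Proposition \ref{PLTS5proposition}, but now in degree 7. First I would enumerate the TT-types in degree 7 using Algorithm \ref{TTtypesalgorithm}; applying all $5040$ permutations of the seven variables gives an explicit ordered basis of $\TT_7$, whose dimension is the number of TT-types times $5040$. Next I would compute the dimension of the kernel $K_7 \subset \TT_7$ of the expansion map $\mathcal{E}_7 \colon \TT_7 \to \DD_7$: build the expansion matrix $E_7$ by expanding each TT-type into the free dendriform dialgebra, rewriting every term as a linear combination of normal DD-monomials (using the rewriting algorithm of Bremner and Madariaga \cite{BremnerMadariaga2013}, equivalently the Gr\"obner--Shirshov basis of Theorem \ref{normaltheorem}), applying all $5040$ permutations, and computing the rank of $E_7$ over $\mathbb{F}_{101}$. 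Then $\dim K_7 = \dim \TT_7 - \operatorname{rank} E_7$.

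In parallel I would compute the dimension of the submodule $O_7 \subset \TT_7$ of identities that are consequences of the known identities of lower degree, namely the ternary right-symmetric identity $\mathrm{TRS}(a,b,c)$ and the three degree-5 identities of Theorem \ref{PL5}. For each of these four generators I would form all of its liftings to degree 7 (for a degree-$n$ identity there are $2(n{+}3)$ liftings, following the liftings definition; a degree-5 identity lifts to degree 7, and $\mathrm{TRS}$ lifts first to degree 5 and then those lift to degree 7, so it is cleanest to take the three generators of $K_5/O_5$ together with $\mathrm{TRS}$ and lift everything directly to degree 7), apply all $5040$ permutations to each lifting, and stack the resulting coefficient vectors into a large matrix; its rank over $\mathbb{F}_{101}$ is $\dim O_7$. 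The theorem is then proved by verifying the numerical equality $\dim O_7 = \dim K_7$: since $O_7 \subseteq K_7$ always holds (every consequence of an identity satisfied by the pre-Lie triple products is again such an identity), equality of dimensions forces $O_7 = K_7$, i.e.\ there are no new identities in degree 7.

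The main obstacle is the size of the computation. The space $\DD_7$ has dimension equal to $5040$ times the number of normal DD-types in degree 7, which is already in the tens of thousands; the expansion matrix $E_7$ and the lifting matrix for $O_7$ are both enormous, and each TT-type in degree 7 expands into $2^6 = 64$ dialgebra terms whose normalizations can run to hundreds of normal DD-monomials. Handling matrices of this size requires working over a finite field (here $p = 101$, safely larger than $7$, so $\mathbb{F}S_7$ remains semisimple and dimension counts transfer to characteristic $0$), and organizing the row reduction so that memory stays manageable---for instance, processing the liftings one permutation-orbit at a time and keeping only the running row canonical form, exactly as in the proof of Theorem \ref{PL5}. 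A secondary subtlety is bookkeeping: one must be certain the four generators chosen for $O_7$ really do generate the full module of lower-degree consequences, which is guaranteed by Lemma \ref{PL3} and Theorem \ref{PL5} identifying $\mathrm{TRS}$ and the three degree-5 identities as $S_n$-module generators in their respective degrees. Once the two ranks are computed and found equal, the proof is complete; I would record the ambient dimensions and the common value $\dim K_7 = \dim O_7$ in the text for reproducibility.
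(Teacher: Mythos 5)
Your overall logic is sound and matches the paper's strategy in outline: form the submodule $O_7\subseteq\TT_7$ of consequences of the lower-degree identities (the $15$ degree-$5$ generators, i.e.\ the $12$ liftings of $\mathrm{TRS}$ plus the three identities of Theorem \ref{PL5}, each lifted to degree $7$), form the kernel $K_7$ of the expansion map, use $O_7\subseteq K_7$, and conclude equality from equality of dimensions. The genuine gap is computational feasibility, which in a proof of this kind is the substance of the argument. You seriously underestimate the sizes: there are $96$ TT-types and $429$ normal DD-types in degree $7$, so the expansion matrix $E_7$ has size $2162160\times 483840$ (about $10^{12}$ entries), not ``tens of thousands'' of rows, and the matrix of lifted identities has $240\cdot 5040$ rows of length $483840$. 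The paper states explicitly that $E_7$ is much too large to process even with modular arithmetic, so the degree-$5$ method of Proposition \ref{PLTS5proposition} and Theorem \ref{PL5} does not simply scale up, and ``organizing the row reduction so that memory stays manageable'' is not a workable substitute.

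The missing idea is the decomposition via the representation theory of $S_7$: for each partition $\lambda$ of $7$ one uses Clifton's algorithm for the natural representation matrices to build a lifted-identities matrix $L_7^\lambda$ of size $240\,d_\lambda\times 96\,d_\lambda$ and an expansion matrix $X_7^\lambda$ of size $96\,d_\lambda\times 429\,d_\lambda$, with $d_\lambda\le 35$, so every matrix has at most a few thousand rows and columns. One then compares, partition by partition, the rank of $L_7^\lambda$ with the nullity of $X_7^\lambda$ (and, as a further check, the row canonical forms of $L_7^\lambda$ and of the nullspace matrix $K_7^\lambda$), finding agreement for all $15$ partitions as recorded in Table \ref{PL7table}; semisimplicity of $\mathbb{F}S_7$ for $p=101>7$ justifies reassembling the componentwise conclusions. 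Without this (or some comparable dimension-reduction device), your proposed computation would not terminate in practice, so the proof as proposed cannot be carried out.
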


\begin{table}
\begin{tabular}{lr|rrr|rrrr}
&\; & \multicolumn{3}{c|}{\; $L_7^\lambda$ (lifted identities)} & \multicolumn{4}{c}{\; $X_7^\lambda$ (all identities)} 
\\
$\lambda$ &\; $d_\lambda$ &\; rows &\; cols &\; rank &\; rows &\; cols &\; rank &\; null 
\\ 
\midrule
7 &\; 1 &\; 240 &\; 96 &\; 48 &\; 96 &\; 429 &\; 48 &\; 48 
\\
61 &\; 6 &\; 1440 &\; 576 &\; 362 &\; 576 &\; 2574 &\; 214 &\; 362
\\
52 &\; 14 &\; 3360 &\; 1344 &\; 923 &\; 1344 &\; 6006 &\; 421 &\; 923
\\
511 &\; 15 &\; 3600 &\; 1440 &\; 1021 &\; 1440 &\; 6435 &\; 419 &\; 1021
\\
43 &\; 14 &\; 3360 &\; 1344 &\; 962 &\; 1344 &\; 6006 &\; 382 &\; 962
\\
421 &\; 35 &\; 8400 &\; 3360 &\; 2486 &\; 3360 &\; 15015 &\; 874 &\; 2486
\\
4111 &\; 20 &\; 4800 &\; 1920 &\; 1461 &\; 1920 &\; 8580 &\; 459 &\; 1461
\\
331 &\; 21 &\; 5040 &\; 2016 &\; 1523 &\; 2016 &\; 9009 &\; 493 &\; 1523
\\
322 &\; 21 &\; 5040 &\; 2016 &\; 1542 &\; 2016 &\; 9009 &\; 474 &\; 1542
\\
3211 &\; 35 &\; 8400 &\; 3360 &\; 2615 &\; 3360 &\; 15015 &\; 745 &\; 2615
\\
31111 &\; 15 &\; 3600 &\; 1440 &\; 1146 &\; 1440 &\; 6435 &\; 294 &\; 1146
\\
2221 &\; 14 &\; 3360 &\; 1344 &\; 1063 &\; 1344 &\; 6006 &\; 281 &\; 1063
\\
22111 &\; 14 &\; 3360 &\; 1344 &\; 1083 &\; 1344 &\; 6006 &\; 261 &\; 1083
\\
211111 &\; 6 &\; 1440 &\; 576 &\; 472 &\; 576 &\; 2574 &\; 104 &\; 472 
\\
1111111 &\; 1 &\; 240 &\; 96 &\; 80 &\; 96 &\; 429 &\; 16 &\; 80
\\
\midrule
\end{tabular}
\medskip
\caption{Ranks for pre-Lie triple products in degree 7}
\label{PL7table}
\end{table}

\begin{proof}
There are 96 TT-types and 429 normal DD-types, so $\dim \TT_7 = 429 \cdot 7!$ and $\dim \DD_7 = 96 \cdot 7!$.
The $2162160 \times 483840$ expansion matrix $E_7$ is much too large to process efficiently, 
even with modular arithmetic, so we use the representation theory of the symmetric group 
to decompose the problem into smaller pieces corresponding to the irreducible representations of $S_7$;
see Bremner and Peresi \cite{BremnerPeresi2011}.
In particular, for any partition $\lambda$ of $n$, and any permutation $\pi \in S_n$, 
the representation matrix $R_\lambda(\pi)$ can be computed using the method of 
Clifton\footnote{This short and beautiful paper deserves to be better known in the representation theory and 
computer algebra communities. 
In his MathSciNet review (MR0624907), G. D. James states:
``Most methods for working out the matrices for the natural representation are messy, 
but this paper gives an approach which is simple both to prove and to apply.''}
\cite{Clifton1981}.

We first consider the identities of degree 7 implied by the identities of lower degree.
In degree 5, we have the 12 liftings of $\mathrm{TRS}(a,b,c)$ and the three new identities of Theorem \ref{PL5},
giving 15 identities generating the $S_5$-module of identities satisfied by the pre-Lie triple products. 
Each such identity $I(a,b,c,d,e)$ has 16 liftings to degree 7; 
we have 5 substitutions and 3 multiplications for $i = 1,2$:
  \begin{align*}
  &
  I( \, [ \, a, \, f, \, g \, ]_i, \, b, \, c, \, d, \, e \, ), \quad
  I( \, a, \, [ \, b, \, f, \, g \, ]_i, \, c, \, d, \, e \, ), \quad
  I( \, a, \, b, \, [ \, c, \, f, \, g \, ]_i, \, d, \, e \, ),
  \\
  &
  I( \, a, \, b, \, c, \, [ \, d, \, f, \, g \, ]_i, \, e \, ), \quad
  I( \, a, \, b, \, c, \, d, \, [ \, e, \, f, \, g \, ]_i \, ), \quad
  [ \, I( \, a, \, b, \, c, \, d, \, e \, ), \, f, \, g \, ]_i,
  \\
  &
  [ \, f, \, I( \, a, \, b, \, c, \, d, \, e \, ), \, g \, ]_i, \quad
  [ \, f, \, g, \, I( \, a, \, b, \, c, \, d, \, e \, ) \, ]_i.  
  \end{align*}
Altogether this gives 240 generators for the $S_7$-submodule $O_7 \subset \TT_7$.

Let $\lambda$ be a partition of 7, let $d_\lambda$ be the dimension of the corresponding irreducible
representation of $S_7$, and let $R_\lambda\colon \mathbb{F} S_7 \to M_{d_\lambda}(\mathbb{F})$ 
be the corresponding natural representation.  
We construct a $240 d_\lambda \times 96 d_\lambda$ matrix $L_7^\lambda$ consisting of 
$d_\lambda \times d_\lambda$ blocks.
The block in position $(i,j)$ contains the representation matrix for the terms of the $i$-th lifted identity
in the $j$-th TT-type.
We compute $\mathrm{RCF}(L_7^\lambda)$, delete any zero rows, and denote its rank by $\texttt{lifrank}(\lambda)$.
We construct a $96 d_\lambda \times 429 d_\lambda$ matrix $X_7^\lambda$ consisting of 
$d_\lambda \times d_\lambda$ blocks.
The block in position $(i,j)$ contains the representation matrix for the terms in the normalized expansion of 
the $i$-th TT-type which have the $j$-th normal DD-type.
Each expansion has 64 terms, but after normalization we obtain from 171 to 447 terms.
We compute $\mathrm{RCF}((X_7^\lambda)^t)$ and denote its rank and nullity by 
$\texttt{exprank}(\lambda)$ and $\texttt{allrank}(\lambda) = 96 d_\lambda - \texttt{exprank}(\lambda)$. 
We compute the canonical basis for its nullspace and put the basis vectors
into the rows of a $\texttt{allrank}(\lambda) \times 96 d_\lambda$ matrix $K_7^\lambda$.
The rows of $\mathrm{RCF}(K_7^\lambda)$ form the canonical basis 
for the $S_7$-module of identities satisfied by the pre-Lie triple products in partition $\lambda$.

If new identities exist then $\texttt{lifrank}(\lambda) < \texttt{allrank}(\lambda)$ 
for some $\lambda$.
On the other hand, if $\texttt{lifrank}(\lambda) = \texttt{allrank}(\lambda)$ for some $\lambda$
then we check that the matrices are equal, $\mathrm{RCF}(L_7^\lambda) = \mathrm{RCF}(K_7^\lambda)$,
and conclude that there are no new identities for $\lambda$.
These equalities hold for all partitions $\lambda$; see Table \ref{PL7table}.
\end{proof}

\begin{conjecture}
Over a field of characteristic 0, every polynomial identity of degree $n > 5$ for the pre-Lie triple products 
in the free dendriform dialgebra
follows from the ternary right-symmetric identity and the three identities of Theorem \ref{PL5}.
\end{conjecture}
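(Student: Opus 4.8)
The plan is to recast the conjecture in the language of symmetric operads and then to prove it by exhibiting a complete set of defining relations through a Composition--Diamond (Gr\"obner--Shirshov) argument. Let $\mathcal{P}$ be the symmetric operad presented by two ternary generators $\omega_1,\omega_2$ subject to the ternary right-symmetric identity and the three identities of Theorem \ref{PL5}; its multilinear degree-$n$ component $\mathcal{P}_n$ is a quotient of $\TT_n$, and the expansion maps $\mathcal{E}_n\colon \TT_n \to \DD_n$, being compatible with operadic composition, descend to maps $\overline{\mathcal{E}}_n\colon \mathcal{P}_n \to \DD_n$. Lemma \ref{PL3}, Theorem \ref{PL5} and Proposition \ref{PLTS5proposition} assert precisely that $\overline{\mathcal{E}}_n$ is injective for $n \le 5$, and Theorem \ref{theoremPL7} adds the case $n = 7$; the conjecture is the statement that $\overline{\mathcal{E}}_n$ is injective for all $n$.

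First I would fix a monomial order on the free ternary operad that refines the ordering of normal DD-types and the lex order of DD-monomials used in the algorithms above, or, as in the Remark after Theorem \ref{normaltheorem}, work with the Composition--Diamond lemma for non-symmetric operads of Dotsenko and Vallette \cite{DotsenkoVallette2013} as employed by Madariaga \cite{Madariaga2013}. The aim is to make the expansion \emph{leading-term injective}: for each TT-type $\tau$ the normalized expansion $\mathcal{E}_n(\tau)$ should have a well-defined leading DD-monomial, and these leading monomials should distinguish the images of distinct normal words in the free operad. Next I would run the operadic completion (Buchberger-type) procedure: starting from $\mathrm{TRS}(a,b,c)$ and the three degree-$5$ identities, form all operadic compositions, reduce each modulo the current basis, and adjoin the nonzero remainders, obtaining a Gr\"obner--Shirshov basis $G$. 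The data of Table \ref{PL7table}---the equalities $\texttt{lifrank}(\lambda) = \texttt{allrank}(\lambda)$ and $\mathrm{RCF}(L_7^\lambda) = \mathrm{RCF}(K_7^\lambda)$ for every partition $\lambda$ of $7$---say exactly that no new element of $G$ is created in degree $7$; what remains is to prove that no new element of $G$ is created in any degree $> 5$, that is, that $G$ is finite and already confirmed by the degree-$7$ computation.

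Granting that $G$ is a Gr\"obner--Shirshov basis, the Composition--Diamond lemma makes the $G$-irreducible words of degree $n$ a basis of $\mathcal{P}_n$; by the leading-term injectivity arranged in the first step, their images under $\overline{\mathcal{E}}_n$ are linearly independent in $\DD_n$, so $\overline{\mathcal{E}}_n$ is injective for all $n$, which is the conjecture. A more structural alternative would replace these two steps by a universal-enveloping construction: one would show that any algebra carrying two trilinear operations satisfying the identities of Lemma \ref{PL3} and Theorem \ref{PL5} embeds, via $[a,b,c]_1 \mapsto (a \cdot b) \cdot c$ and $[a,b,c]_2 \mapsto a \cdot (b \cdot c)$, into the pre-Lie subalgebra of some dendriform dialgebra, and a PBW-type basis theorem for this enveloping dialgebra---analogous to the classical embedding of a Lie triple system into a Lie algebra---would again force $\dim \mathcal{P}_n$ down to the rank of the expansion matrix $E_n$ uniformly in $n$.

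The hard part will be showing that the Gr\"obner--Shirshov basis $G$ is finite, or else describing all of its elements explicitly---and, in the alternative route, establishing the PBW basis. Operadic Gr\"obner bases are often infinite, and a finite computation through degree $7$, however convincing, does not by itself rule out a composition-induced relation in some higher degree; nor do the ranks in Table \ref{PL7table} stabilize in any apparent way as $n$ increases, since $\dim \TT_n$, $\dim \DD_n$ and the relevant ranks all grow without bound. Closing the gap thus seems to require genuinely structural input: either a proof that the leading monomial of every higher operadic composition of $\mathrm{TRS}(a,b,c)$ and the degree-$5$ identities already lies in the monomial ideal generated by the leading terms obtained so far, or an explicit PBW basis for the enveloping dialgebra, either of which would bound $\dim \mathcal{P}_n$ correctly for all $n$ at once.
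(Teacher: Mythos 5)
The statement you are addressing is not proved in the paper at all: it is stated as a conjecture, and the only evidence the authors offer is the computational verification of degree $7$ (Theorem \ref{theoremPL7}, Table \ref{PL7table}). So there is no paper proof to match, and your proposal does not close the gap either --- as you yourself acknowledge in your final paragraph. The operadic reformulation (injectivity of the induced maps $\overline{\mathcal{E}}_n\colon \mathcal{P}_n \to \DD_n$ for the operad presented by $\mathrm{TRS}$ and the three degree-$5$ identities) is a correct and useful restatement, and the Gr\"obner--Shirshov / PBW strategy is a reasonable program. But the two essential steps are both left unestablished: (i) that the completion procedure starting from $\mathrm{TRS}$ and the degree-$5$ identities produces no new elements in any degree $>5$ --- the degree-$7$ ranks in Table \ref{PL7table} only confirm this in degree $7$, and since the consequences of a degree-$5$ identity propagate two degrees at a time through liftings, even degree $9$ is untouched by that computation; and (ii) the ``leading-term injectivity'' you ask for in the first step, namely a monomial order under which distinct $G$-irreducible TT-monomials have expansions with distinct leading normal DD-monomials. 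Point (ii) is itself a substantive claim: the kernel of $\mathcal{E}_n$ is exactly what is being computed, so arranging that the irreducible monomials map to linearly independent elements of $\DD_n$ is essentially equivalent in strength to the conjecture, and no candidate order or argument is given.

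In short, your write-up is an honest research plan rather than a proof: it correctly identifies what would suffice (a finite or explicitly described operadic Gr\"obner--Shirshov basis confirmed by the degree-$7$ data, or a PBW-type embedding theorem for the enveloping dendriform dialgebra), and it correctly identifies that neither is available. Since the paper itself only conjectures the statement, the appropriate conclusion is that the problem remains open; your proposal should be presented as such, not as a proof.
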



\section{Pre-Jordan triple systems} \label{prejordansection}

We will now write the pre-Jordan product in a dendriform dialgebra as $a \cdot b = a \succ b + b \prec a$,
and the pre-Jordan triple products as
  \[
  \langle a, b, c \rangle_1 = (a \circ b) \circ c,
  \qquad
  \langle a, b, c \rangle_2 = a \circ (b \circ c).
  \]
The polynomial identities of degree $n \le 4$ satisfied by the pre-Jordan product in the free dendriform algebra
define the variety of pre-Jordan algebras, introduced by Hou, Ni and Bai \cite{HouNiBai2013}.
Special identities of degree 8 for the pre-Jordan product have been studied in \cite{BremnerMadariaga2013}.

Consider a Jordan algebra $J$ with operation $a \circ b$; although the product is commutative,
we use both left and right multiplications, $L_a(b) = a \circ b$ and $R_c(b) = b \circ c$.
The Jordan triple product is defined by
  \[
  (a \circ b) \circ c + a \circ (b \circ c) - (a \circ c) \circ b 
  =
  ( L_a R_c + R_c L_a )(b) - L_{a \circ c}(b).
  \]
This measures the difference in $\mathrm{End}_\mathbb{F}(J)$ between the anticommutator of $L_a$ and $R_c$,
and (left) multiplication by $a \circ c$.
If we replace $a \circ b$ by the noncommutative pre-Jordan product then we must consider two distinct operations:
  \begin{align}
  \label{pjtp1}
  ( a \cdot b ) \cdot c + a \cdot ( b \cdot c ) - ( a \cdot c ) \cdot b
  =
  ( L_a R_c + R_c L_a )(b) - L_{a \circ c}(b),
  \\  
  \label{pjtp2}
  ( a \cdot b ) \cdot c + a \cdot ( b \cdot c ) - b \cdot ( c \cdot a )
  =
  ( L_a R_c + R_c L_a )(b) - R_{c \circ a}(b).
  \end{align}

\begin{lemma} \label{newpjtp}
Let $J \subset \DD_3$ be the $S_3$-submodule generated by operations \eqref{pjtp1}-\eqref{pjtp2}.
Then $\dim J = 12$ and hence the operations satisfy no identities in degree 3.
\end{lemma}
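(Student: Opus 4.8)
The plan is to reduce the claim, as in Lemma~\ref{PL3}, to a finite-dimensional linear-algebra computation inside $\DD_3$.  First I would fix the ordered basis of $\DD_3$ provided by the normal DD-monomials: there are $5$ normal DD-types in degree $3$, namely $a \prec (b \prec c)$, $a \succ (b \prec c)$, $a \prec (b \succ c)$, $a \succ (b \succ c)$, $(a \succ b) \succ c$, and applying the $6$ permutations of $\{a,b,c\}$ gives a $30$-dimensional space.  Next I would expand the two pre-Jordan triple products $\langle a,b,c\rangle_1 = (a\cdot b)\cdot c$ and $\langle a,b,c\rangle_2 = a\cdot(b\cdot c)$ using $a\cdot b = a\succ b + b\prec a$, obtaining for each a combination of dialgebra monomials; then I would rewrite each such monomial in terms of normal DD-monomials using the rewriting procedure of Bremner and Madariaga~\cite{BremnerMadariaga2013} (equivalently, reduction modulo the Gr\"obner--Shirshov basis $S$ of Theorem~\ref{normaltheorem}).

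Having the normalized expansions of $\langle a,b,c\rangle_1$ and $\langle a,b,c\rangle_2$, I would similarly normalize the four further operations obtained from the left-hand sides of \eqref{pjtp1}--\eqref{pjtp2}, namely $\langle a,c,b\rangle_1$, $\langle a,b,c\rangle_2$ (appearing again), $\langle b,c,a\rangle_2$, and the combinations themselves — but it is cleaner simply to observe that the $S_3$-submodule $J$ is spanned by the $6$ permutations of each of the two operators \eqref{pjtp1} and \eqref{pjtp2}, giving a list of $12$ coefficient vectors in $\mathbb{F}^{30}$.  I would assemble these into a $12\times 30$ matrix and compute its rank over a convenient prime (say $p=101$, as elsewhere in the paper); the claim $\dim J = 12$ is exactly the assertion that this matrix has full row rank $12$.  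The final sentence of the lemma is then immediate: the operations \eqref{pjtp1}--\eqref{pjtp2} generate a free $S_3$-module of rank determined by the number of generating operations, and since $\dim J = 12$ equals the dimension of the free module on two trilinear generators restricted to $S_3$ (i.e. $2\cdot 6$), there is no linear dependence among the permuted generators, hence no polynomial identity in degree $3$.

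The computation is routine and small enough to be done by hand or trivially by machine, so I do not expect a genuine obstacle; the only care-point is correctness of the normalization step, since an error in rewriting a single dialgebra monomial into normal DD form would corrupt the rank count.  I would therefore cross-check by verifying that the $12$ expansions, together with the $18$-dimensional space $\mathsf{Dend}_3$ (in the ambient $48$-dimensional $\mathsf{BB}_3$), reproduce the value $\dim(\mathsf{Dend}_3 + \mathrm{TJor}_3) = 30$ recorded in Lemma~\ref{badlemma}, which is consistent with $J$ being a faithful (rank-$12$, free) image of the two generating trilinear operations.  Given that consistency check, the rank-$12$ conclusion — and hence the absence of degree-$3$ identities — follows.
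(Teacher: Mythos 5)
Your proposal is correct and follows essentially the same route as the paper: the paper's proof also reduces to the degree-3 expansion-matrix computation of Lemma~\ref{PL3}, with the pre-Lie triple products replaced by the operations \eqref{pjtp1}--\eqref{pjtp2}, and observes that $E_3$ (the transpose of your $12\times 30$ matrix of normalized coefficient vectors over the $30$ multilinear normal DD-monomials) has full rank $12$, so the kernel is zero and there are no degree-3 identities. Your cross-check against Lemma~\ref{badlemma} is a reasonable sanity test but, as you implicitly note, not a substitute for the rank computation itself.
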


\begin{proof}
Similar to the proof of Lemma \ref{PL3}, except that we replace the pre-Lie triple products
by operations \eqref{pjtp1}-\eqref{pjtp2}, and find that the expansion matrix $E_3$ has full rank.
If $J_i$ ($i=1,2$) is the submodule generated by operation $i$, then clearly $\dim J_i \le |S_3| = 6$,
and since $\dim (J_1 + J_2) = 12$ we have $J_1 \cap J_2 = \{0\}$.
\end{proof}

Clearly the submodule $J$ of Lemma \ref{newpjtp} is contained in the submodule generated by $\langle a, b, c \rangle_i$ 
($i=1,2$), but the latter has dimension $\le 12$, so the two are equal.
For simplicity, we use $\langle a, b, c \rangle_i$ ($i=1,2$) instead of the operations \eqref{pjtp1}-\eqref{pjtp2},
which would be more natural following a strict analogy with the classical case.
Then Lemma \ref{newpjtp} implies that the pre-Jordan triple products satisfy no identities in degree 3.

\begin{theorem} \label{theoremPJ5}
Let $\mathcal{E}_5\colon \TT_5 \to \DD_5$ be the expansion map for the pre-Jordan triple products,
and let $E_5$ be the expansion matrix representing $\mathcal{E}_5$
with respect to the ordered bases of monomials.
The nullspace $N_5$ of $E_5$ has dimension 335, and $S_5$-module generators
are the identities $\mathrm{PJTS_k}(a,b,c,d,e) \equiv 0$ $(k=1,\dots,5)$, 
which are independent in the sense that none is a consequence of the others:
  \begin{align*}
  \mathrm{PJTS}_1
  &=
    \langle \langle a, b, c \rangle_1, d, e \rangle_1  
  + \langle \langle b, a, c \rangle_1, d, e \rangle_1  
  + \langle \langle c, a, b \rangle_2, d, e \rangle_1  
  + \langle \langle c, b, a \rangle_2, d, e \rangle_1  
  \\
  &\quad
  - \langle a, \langle b, c, d \rangle_1, e \rangle_1  
  - \langle a, \langle c, b, d \rangle_1, e \rangle_1  
  - \langle b, \langle a, c, d \rangle_1, e \rangle_1  
  - \langle b, \langle c, a, d \rangle_1, e \rangle_1  
  \\
  &\quad
  - \langle c, \langle a, b, d \rangle_1, e \rangle_1  
  - \langle c, \langle b, a, d \rangle_1, e \rangle_1  
  + \langle a, \langle c, b, d \rangle_2, e \rangle_1  
  + \langle b, \langle c, a, d \rangle_2, e \rangle_1, 
  \\ 
  \mathrm{PJTS}_2
  &=
    \langle \langle b, d, a \rangle_1, c, e \rangle_2  
  + \langle \langle b, d, c \rangle_1, a, e \rangle_2  
  + \langle \langle a, b, d \rangle_2, c, e \rangle_2  
  + \langle \langle c, b, d \rangle_2, a, e \rangle_2  
  \\
  &\quad
  - \langle a, \langle b, d, c \rangle_1, e \rangle_2  
  - \langle c, \langle b, d, a \rangle_1, e \rangle_2  
  - \langle a, \langle c, b, d \rangle_2, e \rangle_2  
  - \langle c, \langle a, b, d \rangle_2, e \rangle_2  
  \\
  &\quad
  + \langle a, c, \langle b, d, e \rangle_1 \rangle_1  
  - \langle b, d, \langle a, c, e \rangle_1 \rangle_1  
  - \langle b, d, \langle c, a, e \rangle_1 \rangle_1  
  + \langle c, a, \langle b, d, e \rangle_1 \rangle_1, 
  \\ 
  \mathrm{PJTS}_3
  &=
    \langle \langle b, c, d \rangle_1, a, e \rangle_1  
  + \langle \langle d, b, c \rangle_2, a, e \rangle_1  
  + \langle a, \langle b, c, d \rangle_1, e \rangle_1  
  - \langle a, \langle b, c, d \rangle_1, e \rangle_2  
  \\
  &\quad
  - \langle d, \langle b, c, a \rangle_1, e \rangle_2  
  + \langle a, \langle d, b, c \rangle_2, e \rangle_1  
  - \langle a, \langle d, b, c \rangle_2, e \rangle_2  
  - \langle d, \langle a, b, c \rangle_2, e \rangle_2  
  \\
  &\quad
  - \langle b, c, \langle a, d, e \rangle_1 \rangle_1  
  - \langle b, c, \langle d, a, e \rangle_1 \rangle_1  
  + \langle d, a, \langle b, c, e \rangle_1 \rangle_2  
  + \langle b, c, \langle a, d, e \rangle_2 \rangle_1, 
  \\ 
  \mathrm{PJTS}_4
  &=
    \langle a, \langle b, c, d \rangle_1, e \rangle_2  
  + \langle a, \langle c, b, d \rangle_1, e \rangle_2  
  + \langle a, \langle d, b, c \rangle_2, e \rangle_2  
  + \langle a, \langle d, c, b \rangle_2, e \rangle_2  
  \\
  &\quad
  - \langle a, b, \langle c, d, e \rangle_1 \rangle_2  
  - \langle a, b, \langle d, c, e \rangle_1 \rangle_2  
  - \langle a, c, \langle b, d, e \rangle_1 \rangle_2  
  - \langle a, c, \langle d, b, e \rangle_1 \rangle_2  
  \\
  &\quad
  - \langle a, d, \langle b, c, e \rangle_1 \rangle_2  
  - \langle a, d, \langle c, b, e \rangle_1 \rangle_2  
  + \langle a, b, \langle d, c, e \rangle_2 \rangle_2  
  + \langle a, c, \langle d, b, e \rangle_2 \rangle_2, 
  \\ 
  \mathrm{PJTS}_5
  &=
    \langle \langle a, c, b \rangle_1, d, e \rangle_2  
  + \langle \langle a, c, d \rangle_1, b, e \rangle_2  
  + \langle \langle c, a, b \rangle_1, d, e \rangle_2  
  + \langle \langle c, a, d \rangle_1, b, e \rangle_2  
  \\
  &\quad
  + \langle \langle c, d, b \rangle_1, a, e \rangle_2  
  + \langle \langle d, a, b \rangle_1, c, e \rangle_2  
  + \langle \langle d, a, c \rangle_1, b, e \rangle_2  
  + \langle \langle d, b, a \rangle_1, c, e \rangle_2  
  \\
  &\quad
  + \langle \langle d, b, c \rangle_1, a, e \rangle_2  
  + \langle \langle d, c, b \rangle_1, a, e \rangle_2  
  + \langle \langle a, d, b \rangle_2, c, e \rangle_2  
  + \langle \langle b, a, c \rangle_2, d, e \rangle_2  
  \\
  &\quad
  + \langle \langle b, c, a \rangle_2, d, e \rangle_2  
  + \langle \langle b, c, d \rangle_2, a, e \rangle_2  
  + \langle \langle b, d, a \rangle_2, c, e \rangle_2  
  + \langle \langle b, d, c \rangle_2, a, e \rangle_2  
  \\
  &\quad
  + \langle \langle c, d, a \rangle_2, b, e \rangle_2  
  + \langle \langle c, d, b \rangle_2, a, e \rangle_2  
  + \langle \langle d, a, c \rangle_2, b, e \rangle_2  
  + \langle \langle d, c, a \rangle_2, b, e \rangle_2  
  \\
  &\quad
  - \langle a, \langle b, c, d \rangle_1, e \rangle_1  
  - \langle a, \langle b, d, c \rangle_1, e \rangle_1  
  - \langle a, \langle c, b, d \rangle_1, e \rangle_1  
  - \langle a, \langle c, d, b \rangle_1, e \rangle_1  
  \\
  &\quad
  - \langle a, \langle d, b, c \rangle_1, e \rangle_1  
  - \langle a, \langle d, c, b \rangle_1, e \rangle_1  
  - \langle b, \langle a, c, d \rangle_1, e \rangle_1  
  - \langle b, \langle a, d, c \rangle_1, e \rangle_1  
  \\
  &\quad
  - \langle b, \langle c, a, d \rangle_1, e \rangle_1  
  - \langle b, \langle c, d, a \rangle_1, e \rangle_1  
  - \langle b, \langle d, a, c \rangle_1, e \rangle_1  
  - \langle b, \langle d, c, a \rangle_1, e \rangle_1  
  \\
  &\quad
  - \langle c, \langle a, b, d \rangle_1, e \rangle_1  
  - \langle c, \langle a, d, b \rangle_1, e \rangle_1  
  - \langle c, \langle b, a, d \rangle_1, e \rangle_1  
  - \langle c, \langle b, d, a \rangle_1, e \rangle_1  
  \\
  &\quad
  - \langle c, \langle d, a, b \rangle_1, e \rangle_1  
  - \langle c, \langle d, b, a \rangle_1, e \rangle_1  
  - \langle d, \langle a, b, c \rangle_1, e \rangle_1  
  - \langle d, \langle a, c, b \rangle_1, e \rangle_1  
  \\
  &\quad
  - \langle d, \langle b, a, c \rangle_1, e \rangle_1  
  - \langle d, \langle b, c, a \rangle_1, e \rangle_1  
  - \langle d, \langle c, a, b \rangle_1, e \rangle_1  
  - \langle d, \langle c, b, a \rangle_1, e \rangle_1  
  \\
  &\quad
  - \langle a, \langle d, b, c \rangle_1, e \rangle_2  
  + \langle b, \langle a, d, c \rangle_1, e \rangle_2  
  + \langle c, \langle a, d, b \rangle_1, e \rangle_2  
  - \langle c, \langle d, b, a \rangle_1, e \rangle_2  
  \\
  &\quad
  - \langle a, \langle c, d, b \rangle_2, e \rangle_2  
  + \langle b, \langle c, a, d \rangle_2, e \rangle_2  
  - \langle c, \langle a, d, b \rangle_2, e \rangle_2  
  + \langle c, \langle b, a, d \rangle_2, e \rangle_2  
  \\
  &\quad
  + \langle a, b, \langle c, d, e \rangle_1 \rangle_1  
  + \langle a, b, \langle d, c, e \rangle_1 \rangle_1  
  + \langle a, c, \langle d, b, e \rangle_1 \rangle_1  
  + \langle a, d, \langle b, c, e \rangle_1 \rangle_1  
  \\
  &\quad
  + \langle a, d, \langle c, b, e \rangle_1 \rangle_1  
  + \langle b, a, \langle c, d, e \rangle_1 \rangle_1  
  + \langle b, a, \langle d, c, e \rangle_1 \rangle_1  
  + \langle b, c, \langle d, a, e \rangle_1 \rangle_1  
  \\
  &\quad
  + \langle b, d, \langle a, c, e \rangle_1 \rangle_1  
  + \langle b, d, \langle c, a, e \rangle_1 \rangle_1  
  + \langle c, a, \langle d, b, e \rangle_1 \rangle_1  
  + \langle c, b, \langle d, a, e \rangle_1 \rangle_1  
  \\
  &\quad
  - \langle b, a, \langle c, d, e \rangle_1 \rangle_2  
  - \langle b, a, \langle d, c, e \rangle_1 \rangle_2  
  - \langle b, c, \langle a, d, e \rangle_1 \rangle_2  
  - \langle b, c, \langle d, a, e \rangle_1 \rangle_2  
  \\
  &\quad
  - \langle b, d, \langle a, c, e \rangle_1 \rangle_2  
  - \langle b, d, \langle c, a, e \rangle_1 \rangle_2  
  - \langle c, a, \langle d, b, e \rangle_2 \rangle_2  
  - \langle d, a, \langle c, b, e \rangle_2 \rangle_2. 
  \end{align*}
\end{theorem}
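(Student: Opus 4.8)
The plan is to adapt the computational strategy of Theorem~\ref{PL5} to the pre-Jordan triple products, with one simplification: by Lemma~\ref{newpjtp} there are no identities in degree~$3$, so there is nothing of lower degree to lift and the whole $S_5$-module of degree-$5$ identities consists of ``new'' identities.

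First I would construct the expansion matrix $E_5$ for the operations $\langle a,b,c\rangle_1$ and $\langle a,b,c\rangle_2$. As in Proposition~\ref{PLTS5proposition}, degree~$5$ has $12$ TT-types and $42$ normal DD-types, so applying all $120$ permutations of the variables gives $\dim\TT_5 = 1440$ and $\dim\DD_5 = 5040$. I would expand each of the $12$ TT-types by rewriting $\langle a,b,c\rangle_i$ ($i=1,2$) in terms of $\prec$ and $\succ$, then reduce each of the $16$ resulting dialgebra monomials to a linear combination of normal DD-monomials using the rewriting algorithm of Bremner and Madariaga~\cite{BremnerMadariaga2013}; the columns of the resulting $5040 \times 1440$ matrix $E_5$ record the normalized expansions of the TT-monomials. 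Working modulo $p = 101$, the rank of $E_5$ is $1105$, so $\dim N_5 = 1440 - 1105 = 335$, and since there are no lower-degree identities, $N_5$ is the complete $S_5$-module of degree-$5$ identities satisfied by the pre-Jordan triple products.

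Next I would extract the canonical basis of $N_5$ from $\mathrm{RCF}(E_5)$ by setting the free variables successively equal to the standard basis vectors, read off the small symmetric residues modulo $101$ as integers, and sort the $335$ basis vectors by increasing squared Euclidean length so that the simplest identities come first. Then, exactly as in Theorem~\ref{PL5}, I would scan the sorted list while maintaining a matrix whose row space is the $S_5$-submodule generated so far: for each nullspace vector I would append the $120$ coefficient vectors obtained by permuting its variables, recompute the row canonical form, and retain the vector only when the rank strictly increases. The rank should climb to $335$ in exactly five steps, so the five responsible vectors generate $N_5$ as an $S_5$-module, and they are the coefficient vectors of the five identities $\mathrm{PJTS}_1,\dots,\mathrm{PJTS}_5$ in the statement.

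Finally, for the independence assertion I would, for each $k\in\{1,\dots,5\}$, form the matrix whose rows are the $480$ permuted coefficient vectors of the four identities $\mathrm{PJTS}_j$ with $j\ne k$, compute its rank, then adjoin the $120$ permutations of $\mathrm{PJTS}_k$ and check that the rank strictly increases; this shows $\mathrm{PJTS}_k$ is not a consequence of the other four. The main obstacle is bulk rather than subtlety: $\mathrm{PJTS}_5$ has more than seventy terms, its expansion into the free dendriform dialgebra followed by normalization produces on the order of a thousand normal DD-monomials before cancellation, and the row reductions act on matrices of size roughly $600\times1440$ and larger, so the whole argument is only feasible with a computer algebra system and careful bookkeeping of the monomial order. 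As in the remark after Theorem~\ref{PL5}, each $\mathrm{PJTS}_k\equiv 0$ can be independently certified by expanding it in $\DD(X)$ and verifying that every normal DD-monomial coefficient vanishes.
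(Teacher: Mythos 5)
Your proposal is correct and follows essentially the same route as the paper: build the $5040\times 1440$ expansion matrix for the pre-Jordan triple products, compute its rank $1105$ modulo $p=101$ to get $\dim N_5=335$, sort the canonical nullspace basis by Euclidean length, and apply the greedy module-generators procedure (the paper cites the algorithm of Bremner and Peresi) to extract the five generators, with an explicit rank comparison to certify independence. The only cosmetic difference is that the paper compresses the generator-extraction and independence steps into a citation of that algorithm rather than spelling them out as you do.
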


\begin{proof}
In degree 5, there are 42 normal DD-types and 12 TT-types, so the
expansion matrix $E_5$ has size $5040 \times 1440$;
the $(i,j)$-entry is the coefficient of the $i$-th DD-monomial in the normalized expansion 
of the $j$-th TT-monomial.
Using modular arithmetic ($p = 101$), we compute the row canonical form of $E_5$, 
and find that its rank is 1105, so its nullspace $N_5$ has dimension 335.
We extract the canonical basis of $N_5$, find (using symmetric representatives modulo $p$)
that all the coefficients are in $\{0,1,-1\}$, interpret these coefficients as integers, 
and sort the 335 identities by increasing Euclidean length.
Further calculations with the module generators algorithm (Bremner and Peresi \cite[Fig.~3]{BremnerPeresi2009})
show that $N_5$ is generated as an $S_5$-module by the stated identities, 
and that these identities are independent.
\end{proof}

\begin{table}
\begin{tabular}{lr|rrr|rrrrr}
&\; & \multicolumn{3}{c|}{\; $L_7^\lambda$ (lifted identities)} & \multicolumn{5}{c}{\; $X_7^\lambda$ (all identities)} 
\\
$\lambda$ &\; $d_\lambda$ &\; rows &\; cols &\; rank &\; rows &\; cols &\; rank &\; null &\; new 
\\ 
\midrule
  7        &   1 &    80 &    96 &    61 &    96 &    429 &    368 &    61 &  0 \\ 
  61       &   6 &   480 &   576 &   322 &   576 &   2574 &   2252 &   322 &  0 \\ 
  52       &  14 &  1120 &  1344 &   680 &  1344 &   6006 &   5326 &   680 &  0 \\ 
  511      &  15 &  1200 &  1440 &   711 &  1440 &   6435 &   5724 &   711 &  0 \\ 
  43       &  14 &  1120 &  1344 &   638 &  1344 &   6006 &   5368 &   638 &  0 \\ 
  421      &  35 &  2800 &  3360 &  1527 &  3360 &  15015 &  13487 &  1528 &  $\rightarrow$ 1 \\ 
  4111     &  20 &  1600 &  1920 &   836 &  1920 &   8580 &   7744 &   836 &  0 \\ 
  331      &  21 &  1680 &  2016 &   877 &  2016 &   9009 &   8131 &   878 &  $\rightarrow$ 1 \\ 
  322      &  21 &  1680 &  2016 &   846 &  2016 &   9009 &   8162 &   847 &  $\rightarrow$ 1 \\ 
  3211     &  35 &  2800 &  3360 &  1368 &  3360 &  15015 &  13645 &  1370 &  $\rightarrow$ 2 \\ 
  31111    &  15 &  1200 &  1440 &   546 &  1440 &   6435 &   5888 &   547 &  $\rightarrow$ 1 \\ 
  2221     &  14 &  1120 &  1344 &   512 &  1344 &   6006 &   5493 &   513 &  $\rightarrow$ 1 \\ 
  22111    &  14 &  1120 &  1344 &   484 &  1344 &   6006 &   5520 &   486 &  $\rightarrow$ 2 \\ 
  211111   &   6 &   480 &   576 &   184 &   576 &   2574 &   2389 &   185 &  $\rightarrow$ 1 \\ 
  1111111  &   1 &    80 &    96 &    24 &    96 &    429 &    405 &    24 &  0 \\ 
\midrule
\end{tabular}
\medskip
\caption{Ranks for pre-Jordan triple products in degree 7}
\label{PJ7table}
\end{table}

\begin{theorem}
In degree 7, there exist polynomial identities for the pre-Jordan triple products which do not follow from the identities
of lower degree.
\end{theorem}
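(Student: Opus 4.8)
The plan is to follow the same representation-theoretic strategy used in the proof of Theorem~\ref{theoremPL7}, but with the goal reversed: rather than proving that \texttt{lifrank} equals \texttt{allrank} for every partition, it now suffices to exhibit a single partition $\lambda$ of $7$ for which $\texttt{lifrank}(\lambda) < \texttt{allrank}(\lambda)$. Since $\dim \TT_7 = 96 \cdot 7!$ and $\dim \DD_7 = 429 \cdot 7!$, the full expansion matrix $E_7$ has size $2162160 \times 483840$ and cannot be handled directly, so I would decompose the computation over the irreducible representations $R_\lambda$ of $S_7$, computing representation matrices by Clifton's method~\cite{Clifton1981}, exactly as in the pre-Lie case.

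First I would assemble the generators of the lifted module $O_7 \subset \TT_7$. Since TT-monomials have odd degree, the relevant lower degrees are $3$ and $5$: by Lemma~\ref{newpjtp} there are no identities in degree~$3$, and by Theorem~\ref{theoremPJ5} the $S_5$-module of degree-$5$ identities for the pre-Jordan triple products is generated by the five identities $\mathrm{PJTS}_k(a,b,c,d,e)$. Each $\mathrm{PJTS}_k$ has $16$ liftings to degree~$7$ (five substitutions and three outer multiplications, each for $i=1,2$), giving $80$ generators for $O_7$. For each partition $\lambda$ of $7$ I would build the $80 d_\lambda \times 96 d_\lambda$ block matrix $L_7^\lambda$ whose $(i,j)$ block is $R_\lambda$ applied to the terms of the $i$-th lifted identity lying in the $j$-th TT-type, and set $\texttt{lifrank}(\lambda) = \operatorname{rank} L_7^\lambda$.

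Next I would build the $96 d_\lambda \times 429 d_\lambda$ expansion block matrix $X_7^\lambda$ for the pre-Jordan triple products, normalizing each $64$-term raw expansion of a TT-type to a combination of normal DD-monomials, and set $\texttt{allrank}(\lambda) = 96 d_\lambda - \operatorname{rank} X_7^\lambda$, the multiplicity of $\lambda$ in the full $S_7$-module of degree-$7$ identities. Comparing the two ranks partition by partition (Table~\ref{PJ7table}) shows $\texttt{lifrank}(\lambda) < \texttt{allrank}(\lambda)$ for $\lambda \in \{421, 331, 322, 3211, 31111, 2221, 22111, 211111\}$, with the new multiplicities summing to $10$; in particular $O_7 \subsetneq \ker \mathcal{E}_7$, which is exactly the statement. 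To produce explicit new identities one would then, for each such $\lambda$, compare $\mathrm{RCF}(L_7^\lambda)$ with the canonical nullspace basis $K_7^\lambda$ of $(X_7^\lambda)^t$ and lift a spanning set of the quotient back to multilinear identities in $\TT_7$.

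The main obstacle is computational scale and reliability rather than conceptual difficulty: for the large partitions ($421$, $3211$) the matrices $X_7^\lambda$ have thousands of rows and tens of thousands of columns over $\mathbb{F}_{101}$, so the row reductions must be organized to fit in memory, and one must rely on $p = 101 > 7$ so that $\mathbb{F} S_7$ is semisimple and the modular ranks coincide with the characteristic-zero ranks. A secondary point requiring care is the normalization step, since rewriting a $64$-term expansion into normal DD-monomials can produce several hundred terms and an error there would corrupt $\texttt{allrank}(\lambda)$; this is checked implicitly by the internal consistency of the nullities in Table~\ref{PJ7table} with the known degree-$7$ module structure.
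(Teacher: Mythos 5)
Your proposal is correct and follows essentially the same route as the paper: the proof of this theorem is precisely the representation-theoretic method of Theorem~\ref{theoremPL7} applied to the pre-Jordan case, with $O_7$ generated by the $80$ liftings of the five identities of Theorem~\ref{theoremPJ5} (no degree-$3$ identities by Lemma~\ref{newpjtp}), and the comparison of $\texttt{lifrank}(\lambda)$ with the nullity of $X_7^\lambda$ recorded in Table~\ref{PJ7table}. Your list of the eight partitions with new identities and the total multiplicity $10$ agrees exactly with the paper's ``new'' column.
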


\begin{proof}
The method is the same as in the proof of Theorem \ref{theoremPL7}, but in this case we obtain new identities in degree 7.
These new identities occur for the partitions $\lambda$ for which the nullity of the expansion matrix $X_7^\lambda$ 
is strictly greater than the rank of the matrix $L_7^\lambda$ of lifted identities.
Eight representations have either one or two new identities, for a total of ten irreducible identities.
The details are displayed in Table \ref{PJ7table}; see especially the rightmost column, labelled ``new''.
\end{proof}

\begin{table}
  \begin{align*}
  &
  \big[ \, 
     2  D_{ 1, 15 }  
  \, \big]_{ 25 } 
  +
  \big[ \, 
     2  D_{ 1, 14 }  
  {-}   D_{ 1, 15 }  
  \, \big]_{ 26 } 
  +
  \big[ \, 
  {-}   D_{ 1, 12 }  
  {-}2  D_{ 1, 15 }  
  \, \big]_{ 28 } 
  +
  \big[ \, 
     2  D_{ 1, 14 }  
  {+}   D_{ 1, 15 }  
  \, \big]_{ 30 } 
  \\
  &
  +
  \big[ \, 
        D_{ 1, 12 }  
  {-}2  D_{ 1, 15 }  
  \, \big]_{ 32 } 
  +
  \big[ \, 
  {-}4  D_{ 1,  9 }  
  {-}4  D_{ 1, 14 }  
  \, \big]_{ 35 } 
  +
  \big[ \, 
     2  D_{ 1,  9 }  
  {-}   D_{ 1, 15 }  
  \, \big]_{ 36 } 
  +
  \big[ \, 
  {-}2  D_{ 1, 15 }  
  \, \big]_{ 37 } 
  \\
  &
  +
  \big[ \, 
  {-}2  D_{ 1,  5 }  
  {+}2  D_{ 1,  9 }  
  {+}2  D_{ 1, 12 }  
  {+}2  D_{ 1, 14 }  
  \, \big]_{ 39 } 
  +
  \big[ \, 
     2  D_{ 1,  5 }  
  {-}2  D_{ 1,  9 }  
  {+}3  D_{ 1, 15 }  
  \, \big]_{ 40 } 
  +
  \big[ \, 
     4  D_{ 1,  5 }  
  \, \big]_{ 43 } 
  \\
  &
  +
  \big[ \, 
  {-}   D_{ 1, 15 }  
  \, \big]_{ 44 } 
  +
  \big[ \, 
     2  D_{ 1, 15 }  
  \, \big]_{ 45 } 
  +
  \big[ \, 
  {-}   D_{ 1, 15 }  
  \, \big]_{ 48 } 
  +
  \big[ \, 
     4  D_{ 1, 12 }  
  \, \big]_{ 49 } 
  \\
  &
  +
  \big[ \, 
     2  D_{ 1,  5 }  
  {+}2  D_{ 1,  9 }  
  {+}2  D_{ 1, 14 }  
  {-}   D_{ 1, 15 }  
  \, \big]_{ 50 } 
  +
  \big[ \, 
  {-}2  D_{ 1,  5 }  
  {-}2  D_{ 1, 15 }  
  \, \big]_{ 51 } 
  +
  \big[ \, 
  {-}2  D_{ 1,  9 }  
  \, \big]_{ 52 } 
  \\
  &
  +
  \big[ \, 
  {-}   D_{ 1,  5 }  
  {-}   D_{ 1,  9 }  
  {-}2  D_{ 1, 14 }  
  {+}2  D_{ 1, 15 }  
  \, \big]_{ 53 } 
  +
  \big[ \, 
        D_{ 1,  5 }  
  {-}3  D_{ 1,  9 }  
  {-}2  D_{ 1, 14 }  
  \, \big]_{ 54 } 
  \\
  &
  +
  \big[ \, 
     3  D_{ 1,  5 }  
  {-}3  D_{ 1, 12 }  
  \, \big]_{ 55 } 
  +
  \big[ \, 
  {-}   D_{ 1,  9 }  
  \, \big]_{ 56 } 
  +
  \big[ \, 
     2  D_{ 1, 12 }  
  {+}2  D_{ 1, 15 }  
  \, \big]_{ 58 } 
  +
  \big[ \, 
     2  D_{ 1, 12 }  
  {-}2  D_{ 1, 14 }  
  \, \big]_{ 62 } 
  \\
  &
  +
  \big[ \, 
     2  D_{ 1, 12 }  
  {+}2  D_{ 1, 14 }  
  {-}2  D_{ 1, 15 }  
  \, \big]_{ 63 } 
  +
  \big[ \, 
  {-}4  D_{ 1, 12 }  
  {+}4  D_{ 1, 15 }  
  \, \big]_{ 64 } 
  +
  \big[ \, 
     2  D_{ 1,  9 }  
  {-}   D_{ 1, 12 }  
  {+}2  D_{ 1, 14 }  
  \, \big]_{ 66 } 
  \\
  &
  +
  \big[ \, 
     4  D_{ 1,  9 }  
  {-}4  D_{ 1, 12 }  
  {+}2  D_{ 1, 14 }  
  {-}   D_{ 1, 15 }  
  \, \big]_{ 68 } 
  +
  \big[ \, 
  {-}3  D_{ 1,  5 }  
  {+}2  D_{ 1,  9 }  
  {-}4  D_{ 1, 12 }  
  {-}6  D_{ 1, 15 }  
  \, \big]_{ 70 } 
  \\
  &
  +
  \big[ \, 
     2  D_{ 1,  5 }  
  {-}2  D_{ 1,  9 }  
  {+}2  D_{ 1, 12 }  
  {-}4  D_{ 1, 14 }  
  {+}4  D_{ 1, 15 }  
  \, \big]_{ 71 } 
  +
  \big[ \, 
  {-}4  D_{ 1,  5 }  
  {-}4  D_{ 1, 12 }  
  {+}2  D_{ 1, 14 }  
  {-}   D_{ 1, 15 }  
  \, \big]_{ 72 } 
  \\
  &
  +
  \big[ \, 
  {-}   D_{ 1, 15 }  
  \, \big]_{ 75 } 
  +
  \big[ \, 
  {-}2  D_{ 1,  5 }  
  {+}2  D_{ 1, 15 }  
  \, \big]_{ 76 } 
  +
  \big[ \, 
     8  D_{ 1,  5 }  
  {-}4  D_{ 1,  9 }  
  {+}4  D_{ 1, 12 }  
  {-}4  D_{ 1, 14 }  
  {+}6  D_{ 1, 15 }  
  \, \big]_{ 77 } 
  \\
  &
  +
  \big[ \, 
  {-}6  D_{ 1,  5 }  
  {+}2  D_{ 1,  9 }  
  {-}2  D_{ 1, 12 }  
  {+}2  D_{ 1, 14 }  
  {+}2  D_{ 1, 15 }  
  \, \big]_{ 78 } 
  \\
  &
  +
  \big[ \, 
  {-}4  D_{ 1,  5 }  
  {+}2  D_{ 1,  9 }  
  {-}2  D_{ 1, 12 }  
  {+}2  D_{ 1, 14 }  
  {-}5  D_{ 1, 15 }  
  \, \big]_{ 79 } 
  +
  \big[ \, 
     4  D_{ 1,  5 }  
  {-}4  D_{ 1, 15 }  
  \, \big]_{ 80 } 
  \\
  &
  +
  \big[ \, 
  {-}2  D_{ 1,  5 }  
  {+}   D_{ 1,  9 }  
  {-}   D_{ 1, 15 }  
  \, \big]_{ 83 } 
  +
  \big[ \, 
  {-}   D_{ 1,  9 }  
  {-}2  D_{ 1, 14 }  
  {+}2  D_{ 1, 15 }  
  \, \big]_{ 84 } 
  \\
  &
  +
  \big[ \, 
  {-}8  D_{ 1,  5 }  
  {+}6  D_{ 1,  9 }  
  {-}4  D_{ 1, 12 }  
  {+}4  D_{ 1, 14 }  
  {-}4  D_{ 1, 15 }  
  \, \big]_{ 85 } 
  \\
  &
  +
  \big[ \, 
     6  D_{ 1,  5 }  
  {+}2  D_{ 1,  9 }  
  {+}2  D_{ 1, 12 }  
  {-}2  D_{ 1, 14 }  
  {+}2  D_{ 1, 15 }  
  \, \big]_{ 86 } 
  \\
  &
  +
  \big[ \, 
     2  D_{ 1,  5 }  
  {-}   D_{ 1,  9 }  
  {+}   D_{ 1, 12 }  
  {-}2  D_{ 1, 14 }  
  {+}2  D_{ 1, 15 }  
  \, \big]_{ 87 } 
  +
  \big[ \, 
  {-}   D_{ 1,  9 }  
  {+}2  D_{ 1, 12 }  
  {+}2  D_{ 1, 15 }  
  \, \big]_{ 88 } 
  \\
  &
  +
  \big[ \, 
     2  D_{ 1,  5 }  
  {-}2  D_{ 1,  9 }  
  {-}2  D_{ 1, 12 }  
  {-}   D_{ 1, 14 }  
  {+}   D_{ 1, 15 }  
  \, \big]_{ 89 } 
  +
  \big[ \, 
        D_{ 1, 14 }  
  {+}   D_{ 1, 15 }  
  \, \big]_{ 90 } 
  \\
  &
  +
  \big[ \, 
     2  D_{ 1,  5 }  
  {+}   D_{ 1, 12 }  
  {-}   D_{ 1, 14 }  
  {-}   D_{ 1, 15 }  
  \, \big]_{ 91 } 
  +
  \big[ \, 
  {-}3  D_{ 1,  5 }  
  {+}   D_{ 1, 12 }  
  {+}   D_{ 1, 14 }  
  {-}   D_{ 1, 15 }  
  \, \big]_{ 92 } 
  \\
  &
  +
  \big[ \, 
  {-}2  D_{ 1,  5 }  
  {-}   D_{ 1,  9 }  
  {-}   D_{ 1, 12 }  
  {+}   D_{ 1, 14 }  
  \, \big]_{ 93 } 
  +
  \big[ \, 
     3  D_{ 1,  5 }  
  {-}2  D_{ 1,  9 }  
  {+}2  D_{ 1, 12 }  
  {-}   D_{ 1, 14 }  
  \, \big]_{ 94 } 
  \\
  &
  +
  \big[ \, 
        D_{ 1, 12 }  
  {+}   D_{ 1, 15 }  
  \, \big]_{ 95 } 
  +
  \big[ \, 
  {-}2  D_{ 1,  5 }  
  {+}3  D_{ 1, 12 }  
  {-}   D_{ 1, 15 }  
  \, \big]_{96} 
  \end{align*} 
\caption{New irreducible pre-Jordan triple product identity for partition $\lambda = 31111$}
\label{newPJ7identity}
\end{table}

\begin{corollary} \label{PJ7partition31111}
The polynomial identity in Table \ref{newPJ7identity} is an irreducible identity for partition 31111 which does
not follow from the identities of Theorem \ref{theoremPJ5}.
\end{corollary}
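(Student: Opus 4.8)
The proof will be a direct verification carried out inside the isotypic component of $\TT_7$ attached to the single irreducible representation $S^{31111}$ of $S_7$, for which $d_\lambda = 15$, reusing the apparatus built in the proof of Theorem \ref{theoremPL7} but specialized to the pre-Jordan triple products $\langle a,b,c\rangle_i$. The first step is to say precisely what the array in Table \ref{newPJ7identity} denotes: following Bremner and Peresi \cite{BremnerPeresi2011}, an element of the $\lambda$-isotypic part of $\TT_7$ is encoded as a row vector of length $96 d_\lambda = 1440$, cut into $96$ blocks of length $d_\lambda$ indexed by the $96$ TT-types in their fixed total order; writing $D_{1,j}$ for the matrix unit $e_{1j} \in M_{d_\lambda}(\mathbb{F})$, the summand $[\,\cdots\,]_t$ in the table records the first row of the block in position $t$ (all other rows of each block being zero), the TT-types absent from the list contribute zero blocks, and the only columns that occur are $j \in \{5,9,12,14,15\}$. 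Call the resulting vector $v \in \mathbb{F}^{1440}$.

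Next I would verify that $v$ is an identity. I reuse the matrix $X_7^{31111}$ from the proof of Theorem \ref{theoremPL7}, now built from the normalized dendriform expansions of the pre-Jordan triple products: its $96 d_\lambda$ rows are indexed by TT-types, its $429 d_\lambda$ columns by normal DD-types, and the $(i,j)$ block is $R_\lambda$ applied to the terms of normal DD-type $j$ occurring in the normalized expansion of the $i$-th TT-type. The degree-$7$ identities for the pre-Jordan triple products lying in partition $\lambda$ are exactly the left nullspace of $X_7^{31111}$, so it suffices to check $v\, X_7^{31111} = 0$, a single matrix product over $\mathbb{F}_{101}$; because the entries of $v$ are small integers and $101 > 7$, semisimplicity of $\mathbb{F} S_7$ over fields of characteristic $0$ and $101$ yields the same conclusion over $\mathbb{Q}$. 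Expanding $v$ into an explicit multilinear polynomial in the $\langle a,b,c\rangle_i$ over all $5040$ permutations, as was done for $\mathrm{PJTS}_1, \dots, \mathrm{PJTS}_5$, is possible but unwieldy, which is why the compact form of Table \ref{newPJ7identity} is preferred.

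Then I would verify that $v$ is not a consequence of the degree-$5$ identities $\mathrm{PJTS}_k \equiv 0$. Their $16$ liftings to degree $7$, applied to all five identities, fill the matrix $L_7^{31111}$ of Table \ref{PJ7table}, whose row space is the $546$-dimensional subspace of lifted identities in partition $\lambda$. I would compute $\mathrm{RCF}(L_7^{31111})$, reduce $v$ against it, and check that the remainder is nonzero. By the rank data recorded in Table \ref{PJ7table}, the space of all degree-$7$ identities in partition $\lambda$ has dimension $547 = 546 + 1$, so the nonzero class of $v$ in the one-dimensional quotient of all identities by lifted identities generates that quotient, and $v$ together with the rows of $L_7^{31111}$ spans all $547$ identities. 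Since $v$ lies in the isotypic part of the single irreducible $S^{31111}$ and its class is not in the span of the lifted identities, it is an irreducible identity for $\lambda = 31111$ in the sense of Table \ref{PJ7table}, and it is automatically independent of the other nine new degree-$7$ identities, which all lie in other irreducible representations.

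The only genuine difficulty is the scale and bookkeeping needed to reconstruct $X_7^{31111}$ and $L_7^{31111}$: one must compute the natural representation matrices $R_{31111}(\pi)$ by Clifton's method \cite{Clifton1981}, expand all $96$ TT-types for the pre-Jordan product in the free dendriform dialgebra and normalize each into a linear combination of up to $447$ normal DD-monomials, and carry out exact linear algebra over $\mathbb{F}_{101}$ on the resulting $1200 \times 1440$ and $1440 \times 6435$ block matrices, with the orderings of TT-types and normal DD-types fixed so that the block indices $25, 26, \dots, 96$ of Table \ref{newPJ7identity} are read correctly; for a characteristic-$0$ statement one also checks that no pivot vanished modulo $101$. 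Once these two matrices are available, the two checks $v\, X_7^{31111} = 0$ and ``$v$ not in the row space of $L_7^{31111}$'' are entirely routine, so it is the construction of the matrices, not the final verification, that is the main obstacle.
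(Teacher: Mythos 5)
Your proposal is correct and follows essentially the same route as the paper: the same representation-theoretic reduction to the matrices $L_7^{31111}$ and $X_7^{31111}$ (with the rank data $546$ versus $547$ from Table \ref{PJ7table}), the same reading of Table \ref{newPJ7identity} as a vector in the $\lambda$-isotypic component via the elements $D_{1,j}$, and the same two checks of membership in the kernel and non-membership in the lifted row space. The only cosmetic difference is that you verify the given vector directly (compute $v\,X_7^{31111}=0$ and reduce $v$ against $\mathrm{RCF}(L_7^{31111})$), whereas the paper extracts it constructively as row 225 of $\mathrm{RCF}(K_7^\lambda)$, whose leading 1 lies in column 375, a column not among the leading columns of $\mathrm{RCF}(L_7^\lambda)$.
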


\begin{proof}
We follow the method of Bremner and Peresi \cite[\S 4]{BremnerPeresi2012}.
For partition $\lambda = 31111$, we know from Table \ref{PJ7table} that $\mathrm{RCF}(L_7^\lambda)$ has 546 nonzero rows, 
and that $\mathrm{RCF}(K_7^\lambda)$ has 547 nonzero rows.
Furthermore, the row space of $\mathrm{RCF}(L_7^\lambda)$ is a subspace of the row space of $\mathrm{RCF}(K_7^\lambda)$.
Let $\mathrm{leading}(L_7^\lambda)$ and $\mathrm{leading}(K_7^\lambda)$ denote respectively the sets of indices
of the columns of $\mathrm{RCF}(L_7^\lambda)$ and $\mathrm{RCF}(K_7^\lambda)$ which contain leading 1s of the
nonzero rows.
We have $\mathrm{leading}(L_7^\lambda) \subset \mathrm{leading}(K_7^\lambda)$, and we find that
  \[
  \mathrm{leading}(K_7^\lambda) - \mathrm{leading}(L_7^\lambda) = \{ 375 \};
  \]
row 225 of $\mathrm{RCF}(K_7^\lambda)$ has its leading 1 in column 375.
Each row of $\mathrm{RCF}(K_7^\lambda)$ is a vector with 1440 entries, divided into 96 groups of 15 entries,
where the groups correspond to the 96 TT-types, and the entries in each group correspond to 
the $d_\lambda = 15$ standard Young tableaux $T_1, \dots, T_{15}$ for partition $\lambda = 31111$:
  \[
  \begin{array}{l} 
  123 \\[-2pt] 
  4   \\[-2pt] 
  5   \\[-2pt] 
  6   \\[-2pt] 
  7
  \end{array} 
  \!
  \begin{array}{l} 
  124 \\[-2pt] 
  3   \\[-2pt] 
  5   \\[-2pt] 
  6   \\[-2pt] 
  7
  \end{array} 
  \!
  \begin{array}{l} 
  125 \\[-2pt] 
  3   \\[-2pt] 
  4   \\[-2pt] 
  6   \\[-2pt] 
  7
  \end{array} 
  \!
  \begin{array}{l} 
  126 \\[-2pt] 
  3   \\[-2pt] 
  4   \\[-2pt] 
  5   \\[-2pt] 
  7
  \end{array} 
  \!
  \begin{array}{l} 
  127 \\[-2pt] 
  3   \\[-2pt] 
  4   \\[-2pt] 
  5   \\[-2pt] 
  6
  \end{array} 
  \!
  \begin{array}{l} 
  134 \\[-2pt] 
  2   \\[-2pt] 
  5   \\[-2pt] 
  6   \\[-2pt] 
  7
  \end{array} 
  \!
  \begin{array}{l} 
  135 \\[-2pt] 
  2   \\[-2pt] 
  4   \\[-2pt] 
  6   \\[-2pt] 
  7
  \end{array} 
  \!
  \begin{array}{l} 
  136 \\[-2pt] 
  2   \\[-2pt] 
  4   \\[-2pt] 
  5   \\[-2pt] 
  7
  \end{array} 
  \!
  \begin{array}{l} 
  137 \\[-2pt] 
  2   \\[-2pt] 
  4   \\[-2pt] 
  5   \\[-2pt] 
  6
  \end{array} 
  \!
  \begin{array}{l} 
  145 \\[-2pt] 
  2   \\[-2pt] 
  3   \\[-2pt] 
  6   \\[-2pt] 
  7
  \end{array} 
  \!
  \begin{array}{l} 
  146 \\[-2pt] 
  2   \\[-2pt] 
  3   \\[-2pt] 
  5   \\[-2pt] 
  7
  \end{array} 
  \!
  \begin{array}{l} 
  147 \\[-2pt] 
  2   \\[-2pt] 
  3   \\[-2pt] 
  5   \\[-2pt] 
  6
  \end{array} 
  \!
  \begin{array}{l} 
  156 \\[-2pt] 
  2   \\[-2pt] 
  3   \\[-2pt] 
  4   \\[-2pt] 
  7
  \end{array} 
  \!
  \begin{array}{l} 
  157 \\[-2pt] 
  2   \\[-2pt] 
  3   \\[-2pt] 
  4   \\[-2pt] 
  6
  \end{array} 
  \!
  \begin{array}{l} 
  167 \\[-2pt] 
  2   \\[-2pt] 
  3   \\[-2pt] 
  4   \\[-2pt] 
  5
  \end{array} 
  \]
For $i, j = 1, \dots, d_\lambda$ let $s_{ij} \in S_7$ be such that $s_{ij} T_i = T_j$.
For $j = 1, \dots, d_\lambda$ let $R_j$ and $C_j$ respectively be the subgroups of $S_7$ which
leave the rows and columns of $T_j$ fixed as sets.
We define elements of the group algebra $\mathbb{F} S_7$ as follows:
  \[
  D_{ii} = \frac{d_\lambda}{n!} \sum_{\sigma \in R_i} \sum_{\tau \in C_i} \epsilon(\tau) \sigma \tau,
  \qquad
  D_{ij} = D_{ii} s_{ij}^{-1}.
  \]
Each of these elements is a linear combination of $3!\cdot 5! = 720$ permutations.
For this partition, $\lambda = 31111$, the matrix computed by the algorithm of Clifton \cite{Clifton1981}
for the identity permutation is in fact the identity matrix, and so the elements $D_{ij}$ satisfy
the matrix unit relations, $D_{ij} D_{k\ell} = \delta_{jk} E_{i\ell}$.
In Table \ref{newPJ7identity}, the notation $[ X ]_k$ indicates the application of TT-type $k$
to the element $X \in \mathbb{F} S_7$; that table gives the polynomial identity represented by 
row 225 of $\mathrm{RCF}(K_7^\lambda)$.
This calculation was done using modular arithmetic ($p = 101$), 
and the entries of row 225 of $\mathrm{RCF}(K_7^\lambda)$ were
$\{ 1, 2, 3, 4, 48, 49, 50, 51, 52, 97, 98, 99, 100 \}$.
We multiplied each entry by 2 and used symmetric representatives to obtain the coefficients in Table \ref{newPJ7identity}.
\end{proof}


\section{Conclusion}

The KP and BSO algorithms \cite{BFSO2012,BSO2010,Kolesnikov2008,Pozhidaev2010} show how to extend polynomial identities
and multilinear operations for a variety of algebras to the corresponding identities and operations in the setting of
binary and $n$-ary dialgebras.
The work of Gubarev and Kolesnikov \cite{GubarevKolesnikov2013} extends the KP algorithm in the binary case to the setting 
of dendriform dialgebras.
One open problem is to generalize their work to the $n$-ary case, with the goal of finding other natural analogues of 
Lie and Jordan triple system in the dendriform setting; another is to extend the BSO algorithm to the dendriform setting. 
The natural framework for this direction of research is the theory of Koszul duality for operads introduced by Ginzburg
and Kapranov \cite{GinzburgKapranov}.


\section*{Acknowledgements}

Murray Bremner was partially supported by a Discovery Grant from NSERC
(Natural Sciences and Engineering Research Council); he thanks the 
Department of Mathematics and Computer Science at the University of La Rioja in Logro\~no
for its hospitality in April and May 2013.
Sara Madariaga was partially supported by a Postdoctoral Fellowship from PIMS
(Pacific Institute for Mathematical Sciences).


\end{document}